\newtheorem{theorem}{Theorem}[section]
\newtheorem*{theorem*}{Theorem}
\newtheorem{proposition}[theorem]{Proposition}
\newtheorem{lemma}[theorem]{Lemma}
\theoremstyle{definition}
\newtheorem{definition}[theorem]{Definition}
\newtheorem{example}[theorem]{Example}
\title{S-limited shifts}
\author{Benjamin Matson\\
Carleton College\\
\texttt{ben.matson@gmail.com}\\
\and
Elizabeth Sattler\\
Carleton College\\
\texttt{lsattler@carleton.edu}
}
\date{August 2017}
\begin{document}
\theoremstyle{definition}

\newenvironment{fexample}
  {\begin{mdframed}\begin{example}}
  {\end{example}\end{mdframed}}

\newcommand{\bigslant}[2]{{\raisebox{.2em}{$#1$}\left/\raisebox{-.2em}{$#2$}\right.}}

\maketitle

\pagenumbering{arabic}

\abstract{In this paper, we explore the construction and dynamical properties of $\mathcal{S}$-limited shifts.  An $S$-limited shift is a subshift defined on a finite alphabet $\mathcal{A} = \{1, \ldots,p\}$ by a set $\mathcal{S} = \{S_1, \ldots, S_p\}$, where $S_i \subseteq \mathbb{N}$ describes the allowable lengths of blocks in which the corresponding letter may appear.  We give conditions for which an $\mathcal{S}$-limited shift is a subshift of finite type or sofic.  We give an exact formula for finding the entropy of such a shift and show that an $\mathcal{S}$-limited shift and its factors must be intrinsically ergodic.  Finally, we give some conditions for which two such shifts can be conjugate, and additional information about conjugate $\mathcal{S}$-limited shifts.}

\section{Introduction}

$S$-gap shifts are a class of shift spaces defined on the alphabet $\mathcal{A} = \{0,1\}$ by a set $S \subseteq \mathbb{N}_0$ (where $\mathbb{N}_0 = \mathbb{N} \cup \{0\})$, which describes the allowable number of 0s that can separate two 1s in a string in the space.  $S$-gap shifts and their dynamical properties have been and continue to be studied thoroughly and used in applications (\cite{Baker}, \cite{Jung}, \cite{Sgap}).  In particular, in \cite{Jung}, Jung proved that an $S$-gap shift $X(S)$ is mixing if and only if $gcd\{n+1: n \in S\}=1$, along with other necessary and sufficient conditions for which $X(S)$ satisfies various specification properties.  In \cite{Sgap}, Dastjerdi and Jangjoo established a collection of dynamical and topological properties of $S$-gap shifts, including a characterization of the $S$-gap shifts that are subshifts of finite type or sofic.  
In \cite{SPrimegap}, Dastjerdi and Jangjooye introduced a broader class of shift spaces, called $(S, S')$-gap shifts, which again are defined on the alphabet $\{0,1\}$ by two sets $S, S' \subseteq \mathbb{N}_0$, which define the allowable lengths of the blocks of 0s and 1s, respectively.  They established many properties of this class of shift spaces, including results about the entropy of an $(S,S')$-gap shift, and some specific conditions for conjugacy.
\medskip

In this paper, we investigate a broader class of shift spaces, called $\mathcal{S}$-limited shifts. These subshifts are defined on an alphabet $\{1, \ldots, p \}$  by a finite set $\mathcal{S} = \{S_1, \ldots, S_p\}$ with $S_i \subseteq \mathbb{N}$ for $1 \leq i \leq p$, and each $S_i$ describes the allowable lengths of blocks of the corresponding letter in a string in the shift space.  For the majority of the paper, we restrict the order in which the blocks may appear.  In the more general setting in which the order is unrestricted, we refer to the shift as a generalized $\mathcal{S}$-limited shift.  In both cases, we study important dynamical properties of these shift spaces.  In particular, we prove the following result about the entropy of an $\mathcal{S}$-limited shift.  

\begin{theorem*} 
Let $\mathcal{S}=\{S_1,S_2,...,S_p\}$ such that $S_i\subseteq\mathbb{N}$ for $1 \leq i \leq p$. Then, the entropy of the $\mathcal{S}$-limited shift $X(\mathcal{S})$ is $\log \lambda$, where $\lambda$ is the unique positive solution to 
\[\displaystyle\sum_{\omega \in G_\mathcal{S}}x^{-|\omega|}=1,\]
where $G_\mathcal{S}=\{1^{m_1}2^{m_2}...p^{m_p} : m_i\in S_i\text{ for }1\leq i\leq p\}.$
\end{theorem*}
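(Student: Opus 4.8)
The plan is to compute the entropy directly from its definition as the exponential growth rate $h(X(\mathcal{S}))=\lim_{n\to\infty}\tfrac1n\log|B_n(X(\mathcal{S}))|$ of the number of admissible words of length $n$. The first thing I would record is a structural observation that makes the combinatorics tractable: since each $S_i\subseteq\mathbb{N}$ (so $0\notin S_i$), every generating word $\omega=1^{m_1}2^{m_2}\cdots p^{m_p}\in G_\mathcal{S}$ contains each letter at least once, and within a single block the letters are nondecreasing. Consequently the factor $p1$ occurs in a point of $X(\mathcal{S})$ exactly at the boundaries between consecutive $G_\mathcal{S}$-blocks, so each point decomposes \emph{uniquely} as a bi-infinite concatenation of blocks from $G_\mathcal{S}$. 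Let $c_n$ denote the number of length-$n$ words that are exact concatenations of elements of $G_\mathcal{S}$. Uniqueness of the decomposition yields the recursion $c_n=\sum_{\omega\in G_\mathcal{S}}c_{n-|\omega|}$ (with $c_0=1$ and $c_m=0$ for $m<0$), and, since concatenating a length-$m$ and a length-$n$ block-word gives a distinct admissible block-word, the superadditivity $c_{m+n}\ge c_m c_n$. By Fekete's lemma the limit $\lim_n\tfrac1n\log c_n$ exists; I will call it $\log\lambda$.

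Next I would identify $\lambda$ through the generating function $C(x)=\sum_{n\ge0}c_n x^n$. Writing $f(x)=\sum_{\omega\in G_\mathcal{S}}x^{|\omega|}$, the recursion gives $C(x)\bigl(1-f(x)\bigr)=1$, that is $C(x)=\tfrac{1}{1-f(x)}$; note also the convenient factorization $f(x)=\prod_{i=1}^{p}\bigl(\sum_{m\in S_i}x^{m}\bigr)$. The radius of convergence of $C$ equals $1/\lambda$ by Cauchy--Hadamard, and since the coefficients $c_n$ are nonnegative, Pringsheim's theorem places the dominant singularity on the positive real axis. Because $f$ is continuous and strictly increasing on $[0,R_f)$ with $f(0)=0$, this singularity is the smallest positive $r$ with $f(r)=1$. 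Hence $\lambda=1/r$, and substituting $x=1/\lambda=r$ into $f(x)=1$ gives exactly $\sum_{\omega\in G_\mathcal{S}}\lambda^{-|\omega|}=1$. Uniqueness of the positive solution follows because $x\mapsto\sum_{\omega\in G_\mathcal{S}}x^{-|\omega|}=f(1/x)$ is continuous and strictly decreasing on the interval where it is finite, so it attains the value $1$ at most once.

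It remains to pass from $c_n$ to $|B_n(X(\mathcal{S}))|$, and this is the step I expect to be the main obstacle, particularly when some $S_i$ is infinite so that $G_\mathcal{S}$ contains blocks of unbounded length. The lower bound $|B_n|\ge c_n$ is immediate and gives $h(X(\mathcal{S}))\ge\log\lambda$. For the upper bound I would write each $w\in B_n$ uniquely as $s\,\omega_1\cdots\omega_k\,t$, where $s$ is a suffix of a block, $t$ a prefix of a block, and the middle a full concatenation (with a lower-order term accounting for words lying inside a single block). If $P(x)$ and $Q(x)$ are the generating functions counting block-prefixes and block-suffixes by length, the count is dominated by $Q(x)\,C(x)\,P(x)$. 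The crux is to check that $P$ and $Q$ converge at $x=r$, so that these factors do not enlarge the radius of convergence beyond $r$; this I would verify using that $p$ is a fixed finite number and the factorization above, which bound each prefix- or suffix-count by finitely many products of the series $\sum_{m\in S_i}x^{m}$ together with geometric terms, all of radius at least $r$. Granting this, $|B_n|$ and $c_n$ share the exponential growth rate $\lambda$, and therefore $h(X(\mathcal{S}))=\log\lambda$, as claimed.
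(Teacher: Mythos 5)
Your proposal is correct and follows essentially the same route as the paper: both sandwich $|B_n|$ between the count of pure $G_\mathcal{S}$-concatenations, whose generating function is the geometric series $\sum_{k}f(x)^k=1/(1-f(x))$ with $f(x)=\sum_{\omega\in G_\mathcal{S}}x^{|\omega|}$ and radius equal to the positive root of $f(x)=1$, and that count decorated by block prefixes and suffixes, which contribute only polynomially many words per length (the paper phrases this as a transition-count bound on words containing no subword from $G_\mathcal{S}$). The only points to polish are that Fekete's lemma does not literally give existence of $\lim_n \frac{1}{n}\log c_n$ when $c_n=0$ infinitely often (use $\limsup$, which is all Cauchy--Hadamard requires), and that for your upper bound you only need the radii of $P$ and $Q$ to be at least $r$ (they are at least $1$), not actual convergence at $x=r$.
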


We note that this theorem is an extension of a result in \cite{SPrimegap}, which states that the entropy of an $(S, S')$-gap shift is given by $\log(\lambda)$, where $\lambda$ is the unique non-negative solution to 
\[ \sum_{s+s' \in \{\!\!\{S+S'\}\!\!\} } x^{-(s+s'+2)}=1,\]
where $\{\!\!\{S+S'\}\!\!\} = \{ s+s' : s\in S, s' \in S'\}$ and values of multiplicities are included (that is, if $s_1 + s_1' = s_2 + s_2'$ but $s_1 \neq s_2$ then both $s_1 +s_1'$ and $s_2 + s_2'$ are included in $\{\!\!\{S + S'\}\!\!\}$).  
\medskip

Next, we obtain the following results regarding conjugacy between two $\mathcal{S}$-limited shifts.

\begin{theorem*}
Let $\mathcal{S}=\set{S_1,...,S_p}$ and $\mathcal{T}=\set{T_1,...,T_q}$ and suppose $X(\mathcal{S})$ is conjugate to $X(\mathcal{T})$. Then, where $G_\mathcal{S}$ and $G_\mathcal{T}$ denote the collections $\{1^{s_1}\ldots p^{s_p}: s_i \in S_i\}$ and $\{1^{t_1} \ldots q^{t_q}; t_i \in T_i\}$ respectively, for all $l\in\mathbb{N}$, $\abs{\set{x\in G_\mathcal{S} : |x|=l}}=\abs{\set{y\in G_\mathcal{T} : |y|=l}}$.
\end{theorem*}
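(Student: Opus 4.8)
The plan is to exploit the fact that the number of points of each period is a conjugacy invariant, and to show that for an $\mathcal{S}$-limited shift this data determines the length profile $a_l := \abs{\set{x \in G_\mathcal{S} : |x| = l}}$ exactly (and likewise $b_l := \abs{\set{y \in G_\mathcal{T} : |y| = l}}$ for $\mathcal{T}$). Writing $p_n(\mathcal{S})$ for the number of $x \in X(\mathcal{S})$ with $\sigma^n x = x$, any conjugacy $X(\mathcal{S}) \to X(\mathcal{T})$ restricts to a bijection on period-$n$ points, so $p_n(\mathcal{S}) = p_n(\mathcal{T})$ for every $n$. The goal is therefore to compute $\sum_{n\ge 1} p_n(\mathcal{S})\, t^n$ as an explicit function of the generating series $f_{\mathcal S}(t) := \sum_l a_l t^l$, invert that relation, and read off $a_l = b_l$.

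The structural input is that (assuming $p \ge 2$) every non-constant point of $X(\mathcal{S})$ admits a unique decomposition into blocks from $G_\mathcal{S}$: within a block the letters are nondecreasing, while at a block boundary one passes from $p$ down to $1$, so the boundaries are exactly the positions of strict descent and the decomposition is forced. Thus $X(\mathcal{S})$ is the renewal (loop) system coded by $G_\mathcal{S}$. I would count period-$n$ points by a ``pointed cyclic composition'' bijection: a non-constant $\sigma^n$-fixed point $x$ has its set of block boundaries invariant under translation by $n$, so position $0$ lies in a unique code block $\omega$ at a unique offset $j \in \{0,\dots,|\omega|-1\}$, while the complementary arc of length $n-|\omega|$ is tiled by an arbitrary ordered sequence of code blocks. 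Since the number of ordered tilings of a length-$m$ segment has generating function $1/(1-f_{\mathcal S})$, summing over $\omega$ and $j$ yields
\[ \sum_{n\ge 1} p_n^{\mathrm{nc}}(\mathcal{S})\, t^n = \frac{t\, f_{\mathcal S}'(t)}{1 - f_{\mathcal S}(t)}, \]
where $p_n^{\mathrm{nc}}$ counts the non-constant fixed points and the numerator $t f_{\mathcal S}'(t) = \sum_\omega |\omega|\, t^{|\omega|}$ records the offset choices.

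It remains to handle the constant points: if some $S_c$ is infinite then $c^\infty \in X(\mathcal{S})$, and each such point contributes $1$ to every $p_n$. Letting $d_{\mathcal S}$ be the number of such constant points, we get $p_n(\mathcal{S}) = p_n^{\mathrm{nc}}(\mathcal{S}) + d_{\mathcal S}$, and since code blocks have length $\ge p \ge 2$ the non-constant contribution to $p_1$ vanishes, so $d_{\mathcal S} = p_1(\mathcal{S})$ is itself a conjugacy invariant. Hence $d_{\mathcal S} = d_{\mathcal T}$, the constant-point terms $d\,t/(1-t)$ cancel when we equate the two periodic-point series, and conjugacy forces
\[ \frac{t\, f_{\mathcal S}'(t)}{1 - f_{\mathcal S}(t)} = \frac{t\, f_{\mathcal T}'(t)}{1 - f_{\mathcal T}(t)}. \]
Cancelling $t$ and recognizing each side as $-\frac{d}{dt}\log(1 - f)$, I would integrate: the logarithmic derivatives agree, and both $f_{\mathcal S}$ and $f_{\mathcal T}$ vanish at $0$, so $1 - f_{\mathcal S} = 1 - f_{\mathcal T}$ as formal power series, giving $a_l = b_l$ for all $l$, which is exactly the claim.

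I expect the main obstacle to be the periodic-point count itself — correctly accounting for the offset of the origin inside a block (the source of the derivative $f_{\mathcal S}'$) and verifying that the block decomposition is genuinely unique — together with the bookkeeping for constant points, which must be separated out cleanly so that the invariance of $p_1$ can be used to cancel them. Everything after the generating-function identity is a short formal-power-series manipulation.
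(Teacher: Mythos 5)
Your argument is correct and rests on the same two ingredients as the paper's proof: the invariance of periodic-point counts under conjugacy, and the unique decomposition of non-constant points of $X(\mathcal{S})$ into blocks from $G_\mathcal{S}$, the block boundaries being exactly the descents $p\to 1$. Where you differ is in the bookkeeping. The paper argues by minimal counterexample: taking the least $l$ with the two counts unequal, it matches the period-$l$ points assembled from blocks of length less than $l$ via a length-preserving bijection of the shorter blocks, and locates the discrepancy in the points $\overline{x}$ with $x\in G_\mathcal{S}$, $\abs{x}=l$. You instead compute the full periodic-point generating function $t f'(t)/(1-f(t))$ of the coded system and invert the relation as formal power series. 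This costs more up front but buys two things: the count of periodic points built from shorter blocks is actually computed (via the offset-plus-tiling bijection) rather than asserted, and you are forced to confront the constant points $c^{\infty}$ (present exactly when $S_c$ is infinite), which contribute to every $p_n$ and which the paper's tally silently omits; your observation that their number equals $p_1$, hence is itself invariant and cancels, is the right way to dispose of them. Two points are worth making explicit if you write this up: the unique-decomposition claim and the vanishing of the non-constant contribution to $p_1$ require $p,q\ge 2$ (for a one-letter alphabet the statement itself degenerates, since $X(\mathcal{S})$ is then a single fixed point whatever $S_1$ is), and one should record that a periodic point with an infinite one-sided run of a single letter must be constant, so every non-constant periodic point genuinely is a bi-infinite concatenation of code blocks and your bijection captures all of them.
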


\begin{theorem*}
	Let $\mathcal{S}=\set{S_1,...,S_p}$ and $\mathcal{T}=\set{T_1,...,T_p}$ where for all $i$, $S_i,T_i\subseteq\mathbb{N}$. Let $s_i^m$ denote the $m$-th element of $S_i$ sorted in increasing order, and define $t_i^m$ similarly. If for all $i_1,i_2,...,i_p\in\mathbb{N}$, \[
		\sum_{k=1}^p s_k^{i_k}=\sum_{k=1}^p t_k^{i_k},
	\]
	then $X(\mathcal{S})$ is conjugate to $X(\mathcal{T})$.
\end{theorem*}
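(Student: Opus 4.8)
The plan is to first distill the hypothesis into a transparent algebraic form and then build an explicit conjugacy from it. Fixing every index equal to $1$ except the $k$-th, which I set to an arbitrary $m$, and subtracting the resulting identity from the one for the all-ones index vector, all terms cancel except those coming from $S_k$ and $T_k$, giving $s_k^m-s_k^1=t_k^m-t_k^1$ for every $m$. Hence, setting $c_k:=s_k^1-t_k^1\in\mathbb{Z}$, we obtain $s_k^m=t_k^m+c_k$ for all $m$, i.e. $S_k=T_k+c_k$ as sets (so in particular $|S_k|=|T_k|$). Evaluating the hypothesis at the all-ones vector then forces $\sum_{k=1}^p c_k=0$. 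Thus the hypothesis is equivalent to the existence of integers $c_1,\dots,c_p$ with $\sum_k c_k=0$ and $S_k=T_k+c_k$ for each $k$; moreover $T_k\subseteq\mathbb{N}$ guarantees every $t_k^m=s_k^m-c_k\ge 1$, so translating a run length never produces a nonpositive value.

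Next I would set up unique block decompositions. Because each $S_i\subseteq\mathbb{N}$ consists of positive integers, every generator $1^{m_1}2^{m_2}\cdots p^{m_p}\in G_{\mathcal{S}}$ begins with $1$ and ends with $p$, so in any point of $X(\mathcal{S})$ the generator boundaries are exactly the positions of a $p\to 1$ transition; this makes the decomposition of a point into generators unique, and likewise for $X(\mathcal{T})$. The relation $S_k=T_k+c_k$ yields a bijection $\phi\colon G_{\mathcal{S}}\to G_{\mathcal{T}}$, namely $\phi(1^{m_1}\cdots p^{m_p})=1^{m_1-c_1}\cdots p^{m_p-c_p}$, whose total length is unchanged precisely because $\sum_k c_k=0$. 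Replacing each generator of a point $x\in X(\mathcal{S})$ by its $\phi$-image defines a map $\Phi\colon X(\mathcal{S})\to X(\mathcal{T})$; since $\phi$ preserves lengths, $\Phi$ fixes every generator boundary of $x$ and merely relabels letters inside each generator, and it manifestly commutes with the shift.

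The crux is to verify that $\Phi$ is a sliding block code, i.e. that the output letter at a coordinate depends only on a bounded window of input coordinates — a priori unclear, since generators may be arbitrarily long. Here $\sum_k c_k=0$ does the work: inside a generator the boundary between the run of $k$'s and the run of $(k+1)$'s is displaced by the bounded amount $C_k:=c_1+\cdots+c_{k-1}$, while the generator boundaries themselves (governed by $C_{p+1}=\sum_k c_k=0$) do not move at all. If $r$ denotes the offset of a coordinate $n$ within its generator and $a_k:=\sum_{j<k}m_j$ the input run boundaries, a short computation shows that $\Phi$ assigns at $n$ the unique letter $k$ with $a_k-C_k\le r<a_{k+1}-C_{k+1}$, and the relevant boundaries $a_k$ all lie within $W:=\sum_j|c_j|$ of $n$; hence $\Phi$ is realized by a rule reading the window of radius $W$ about $n$. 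I expect this localization estimate — pinning down exactly how far one must look to name the output letter, while tracking run boundaries across a possible generator boundary and using $\sum_k c_k=0$ to keep the bookkeeping consistent — to be the main technical obstacle.

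Finally, applying the same construction with the constants $-c_k$ gives a map $\Psi\colon X(\mathcal{T})\to X(\mathcal{S})$, which is well defined because adding $c_k$ to a run length $t_k^m$ returns $s_k^m\ge 1$, so $\Psi$ indeed lands in $X(\mathcal{S})$. The generator bijections underlying $\Phi$ and $\Psi$ are mutually inverse, and both maps fix all generator boundaries, whence $\Psi\circ\Phi=\mathrm{id}$ and $\Phi\circ\Psi=\mathrm{id}$. Therefore $\Phi$ is a shift-commuting bijection whose inverse is again a sliding block code, that is, a conjugacy, and $X(\mathcal{S})$ is conjugate to $X(\mathcal{T})$.
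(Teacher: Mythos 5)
Your proposal is correct and follows the same overall strategy as the paper: extract translation constants from the hypothesis ($s_k^m=t_k^m+c_k$ with $\sum_k c_k=0$, matching the paper's $d_j$), build the length-preserving bijection $G_\mathcal{S}\to G_\mathcal{T}$, and realize it as a sliding block code by observing that each internal transition point is displaced by the bounded partial sum $\sum_{j\le k}c_j$ while the external ($p\to 1$) transition points are fixed; your window radius $W=\sum_j|c_j|$ plays the role of the paper's $r=1+\max_k r_k$. The one place you genuinely diverge is in establishing invertibility: the paper proves a standalone lemma (its Lemma \ref{induced bijection}) showing that any sliding block code inducing a bijection of core sets is a conjugacy, which requires a somewhat laborious case analysis for injectivity, surjectivity, and the points of the closure that begin or end with an infinite block of one letter. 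You instead run the identical construction with the constants $-c_k$ to produce an explicit inverse sliding block code and check the two compositions are the identity on the dense set of points that decompose into generators; this is cleaner and exploits the symmetry of the hypothesis in $\mathcal{S}$ and $\mathcal{T}$. The only thing you leave implicit is the extension to closure points with a one-sided infinite block of a single letter, where the generator decomposition breaks down: since your map is a bona fide sliding block code, continuity and density give that it carries $X(\mathcal{S})$ into the closed set $X(\mathcal{T})$ and that the composition identities persist, but this should be said explicitly (the paper spends two lemmas on exactly this point).
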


Both theorems are generalizations of existing results for $(S, S')$-gap shifts from \cite{SPrimegap}.  The former is a generalization of a theorem that states if $X(S,S')$ and $X(T,T')$ are conjugate $(S,S')$-gap shifts, then $\{\!\!\{S + S'\}\!\!\} = \{\!\!\{T + T'\}\!\!\}$.  The latter is a generalization of a theorem that assumes $\{\!\!\{S + S'\}\!\!\} = \{\!\!\{T + T'\}\!\!\}$ and gives specific cases for which $X(S,S')$ is conjugate to $X(T,T')$.


\section{Background and Definitions}
Let $\mathcal{A} = \{1, \ldots, p\}$ be an alphabet, and let $X$ be a shift space on $\mathcal{A}$.  Define $B_n(X) = \{ \omega_1 \ldots \omega_n : \omega_i \in \mathcal{A} \text{ for } 1 \leq i \leq n \}$ to be the collection of all words of length $n$ and let $\mathcal{L}(X) = \bigcup_{n \geq 1} B_n(X)$ denote the \textbf{language} of $X$.  If the shift space is understood in context, we will use $B_n$ and $\mathcal{L}$.  
A shift space $X$ is called \textbf{irreducible} if for all $\omega, \tau \in \mathcal{L}(X)$, there exists some $\xi  \in \mathcal{L}(X)$ such that $\omega \xi \tau \in \mathcal{L}(X)$.  A word $\xi \in \mathcal{L}(X)$ is called \textbf{synchronizing} if for any $\omega \xi, \xi \tau \in \mathcal{L}(X)$, the word $\omega \xi \tau \in \mathcal{L}(X)$.  An irreducible shift space $X$ with a synchronizing word is called a \textbf{synchronized system}.  A shift space $X$ is called \textbf{mixing} if for all $\omega, \tau \in \mathcal{L}(X)$, there exists some $N$ such that for each $n \geq N$, there is a $\xi \in B_n(X)$ such that $\omega \xi \tau \in \mathcal{L}(X)$.  
\medskip

Let $X$ denote the full shift on $\mathcal{A}$.  A \textbf{subshift of finite type (SFT)} is a subshift $X_F \subseteq X$ defined by a finite collection $F\subset \mathcal{L}(X)$ of forbidden words, meaning that $X_F=\{\omega \in X: \tau \text{ appears nowhere in } \omega \text{ for all } \tau \in F\}$. For any given SFT, the list of forbidden words define both an adjacency matrix and finite graph presentation that represent the shift space.  The finite graph presentation consists of finitely many vertices, and infinite walks correspond to infinite strings in the shift space.  Similarly, one can consider a broader class, called \textbf{sofic} subshifts, which includes all subshifts that have a finite graph presentation.  As an adjacency matrix can be defined from a finite graph, each sofic subshift has an associated adjacency matrix.  For more information on SFTs and sofic subshifts, see \cite{LM}.
\medskip

One of the simplest constructions of a subshift that is neither sofic nor an SFT on the alphabet $\mathcal{A} = \{0,1\}$ is the prime gap shift, which includes all strings in which any two adjacent 1s are separated by a prime number of 0s.  This shift space is an example of an $S$-gap shift.  
An \textbf{$\bm{S}$-gap shift} is a subshift of the form \[X_S=\overline{\{...10^{n_{-1}}10^{n_0}10^{n_1}1... : n_i\in S\}},\]
where $S \subseteq \mathbb{N}_0$.  While the prime gap shift provides an example of a non-sofic subshift, there exist $S$-gap shifts that are SFTs or sofic as well.  However, we cannot guarantee the existence of a finite graph presentation or adjacency matrix for a general $S$-gap shift, and hence new techniques must be used for exploring properties of these shift spaces.  A more general class of subshifts that includes all $S$-gap shifts is the $(S, S')$-gap shifts.  An $\bm{(S,S')}$\textbf{-gap shift} is a subshift on the alphabet $\mathcal{A} = \{0,1\}$ of the form: \[X(S,S')=\overline{\{...1^{m_{-2}}0^{n_{-1}}1^{m_{-1}}0^{n_0}1^{m_0}0^{n_1}1^{m_1}... : n_i\in S,m_i\in S'\text{ for all } i\in\mathbb{Z}\}},\]
where $S, S' \subseteq \mathbb{N}$.
Again, it is the case that $(S,S')$-gap shifts can be SFT, sofic, or neither.  To clarify some of these characterizations, we consider the following examples.

\begin{example}[Golden mean shift]
Let $\mathcal{A} = \{0,1\}$.  The golden mean shift is an SFT with forbidden word list $F = \{11\}$.  It has the following graph presentation and adjacency matrix:
\begin{center}
\begin{multicols}{2}
\begin{tikzpicture}

\tikzset{vertex/.style = {shape=circle,draw,minimum size=1.5em}}
\tikzset{edge/.style = {->,> = latex'}}

\node[vertex] (A) {}
	edge[in=210,out=150,loop] node[auto,swap] {0} (A);
\node[vertex] (B) [right=of A] {}
	edge[<-, bend right] node[auto,swap] {1} (A)
    edge[->, bend left] node[auto] {0} (A);

\end{tikzpicture}

$\begin{bmatrix}
1 & 1 \\
1 & 0
\end{bmatrix}$
\end{multicols}
\end{center}

The golden mean shift is also an $S$-gap shift with $S = \mathbb{N}$.

\end{example}

\begin{example}[Even Shift]
Let $\mathcal{A} = \{0,1\}$.  The even shift is a sofic subshift that contains all infinite strings in which any two adjacent 1s are separated by an even number of 0s.  It has the following graph presentation and adjacency matrix:

\begin{center}
\begin{multicols}{2}
\begin{tikzpicture}

\tikzset{vertex/.style = {shape=circle,draw,minimum size=1.5em}}
\tikzset{edge/.style = {->,> = latex'}}

\node[vertex] (A) {}
	edge[in=210,out=150,loop] node[auto,swap] {1} (A);
\node[vertex] (B) [right=of A] {}
	edge[<-, bend right] node[auto,swap] {0} (A)
    edge[->, bend left] node[auto] {0} (A);

\end{tikzpicture}

$\begin{bmatrix} 1&1 \\ 1&0 \end{bmatrix}$

\end{multicols}
\end{center}
  It is an $S$-gap shift with $S = \{2n : n \in \mathbb{N}_0\}$.  
\end{example}

For the remainder of this paper, unless otherwise specified, we fix our alphabet $\mathcal{A} = \{1,2, \ldots, p\}$.  
Building upon the ideas of an $S$-gap and $(S,S')$-gap shift, we arrive at out definition of an $\mathcal{S}$-limited shift.  As a preliminary step, we will introduce our collection of limiting sets $\mathcal{S} = \{S_1, \ldots, S_p\}$ with $S_i \subseteq \mathbb{N}$ for $1 \leq i \leq p$ and a collection of finite blocks 
\[G_\mathcal{S}=\{1^{m_1}2^{m_2}...p^{m_p} : m_i\in S_i\text{ for }1\leq i\leq p\}\]
called the \textbf{core set}.  An \textbf{$\bm{\mathcal{S}}$-limited shift} is a subshift on $\mathcal{A}$ defined by 
\[X(\mathcal{S})=\overline{\{\cdots x_{-1}x_0x_1\cdots : x_i\in G_\mathcal{S}\text{ for all } i \in \mathbb{Z}\}}.\]
In the case that any of the $S_i$ are infinite, we note that taking the closure is a non-trivial step in defining the subshift, as we now consider bi-infinite strings ending or beginning with an infinite string of a single letter.
\medskip

While most of our results concern $\mathcal{S}$-limited shifts, one can extend a few results to a more general setting.  So far, we have been restricting the order in which the blocks of letters can appear in $\mathcal{S}$-limited shifts.  Let $\mathcal{A}$ and $\mathcal{S}$ be as above.  A \textbf{generalized $\mathcal{S}$-limited shift}, denoted $X_\mathcal{S}$, is the closure of the set
\[ \{\ldots \omega_{-1}^{\alpha_{-1}}\omega_0^{\alpha_0}\omega_1^{\alpha_1} \ldots \mid \omega_i \in \mathcal{A}, \omega_i \neq \omega_{i+1}, \text{ and } \alpha_i \in S_{\omega_i} \text{ for all } i\}. \]
The first difference between $\mathcal{S}$-limited shifts and generalized $\mathcal{S}$-limited shifts is that the full shift on $p$ letters is a generalized $\mathcal{S}$-limited shift with $S_i = \mathbb{N}$ for $1 \leq i \leq p$.  However, the full shift is not an $\mathcal{S}$-limited shift.  The most significant difference between these two classes of shift spaces is the absence of an analogous core set $G_\mathcal{S}$ in the case of a generalized $\mathcal{S}$-limited shift.  
\medskip

As mentioned in the introduction, we will discuss the entropy of $\mathcal{S}$-limited shifts.  Although the definition of entropy is more complex in a general setting (see \cite{Walters} for details), the definition reduces nicely in the symbolic dynamics setting.  The \textbf{(topological) entropy} of a subshift $X$, denoted by $h(X)$,  is given by
\[h(X) = \lim_{n \to \infty} \frac{\log(B_n(X))}{n}.\]
Entropy can be used to define another class of subshifts.  A subshift $X$ is \textbf{almost sofic} if given $\varepsilon >0$, there exists an SFT $Y\subset X$ such that $h(Y) > h(X) - \varepsilon$.  We are using the definition from \cite{Peterson}, but analogous definitions are used in other sources, such as \cite{LM}.
\medskip
  
Entropy is widely used as a conjugacy invariant in the study of dynamical systems.  We say that two subshifts $X$ and $Y$ are conjugate if there exists a homeomorphism $\varphi:X \to Y$ such that $\varphi \circ \sigma = \sigma \circ \varphi$, where $\sigma$ denotes the shift map.  In searching for conjugacy between two shift spaces, we often look for an invertible sliding block code.  A \textbf{sliding block code} with memory $m$ and anticipation $n$ is a map $\varphi:X\to\mathcal{A}^{\mathbb{Z}}$ defined by $y=\varphi(x)$ with $y_i=\Phi(x_{[i-m,i+n]})$, where $\Phi:\mathcal{B}_{m+n+1}(X)\to\mathcal{A}$ and $x_{[i-m,i+n]} = x_{i-m}x_{i-m+1} \cdots  x_{i+n} \in \mathcal{B}_{m+n+1}(X)$.


\section{Dynamical Properties}
For the remainder of this paper, we will fix our alphabet $\mathcal{A} = \{1, \ldots, p\}$, and let $\mathcal{S} = \{S_1, \ldots S_p\}$ with $S_i \subseteq \mathbb{N}$.  
We will use $X(\mathcal{S})$ to denote the $\mathcal{S}$-limited shift and $X_\mathcal{S}$ to denote a generalized $\mathcal{S}$-limited shift.  As we first turn our attention to some of the properties of $\mathcal{S}$-limited shifts, we first note some unique features of these subshifts.  
 In the case that $S_i = \mathbb{N}$ for each $1 \leq i \leq p$, the corresponding $\mathcal{S}$-limited shift is an SFT with forbidden word list $F = \{ nm :m\neq n, m \neq n+1, 1\leq n < p\} \cup \{pi : 1<i < p\}$.  Given that a maximal $\mathcal{S}$-limited shift is an SFT, we consider the following proposition, which gives necessary and sufficient conditions for which an $\mathcal{S}$-limited shift is an SFT.

\begin{proposition} \label{SFT limited}
$X(\mathcal{S})$ is an SFT if and only if $S_i$ is finite or cofinite for every $S_i\in\mathcal{S}$.
\end{proposition}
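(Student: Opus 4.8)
The plan is to prove both directions by translating the combinatorial condition on the $S_i$ into a statement about whether the set of forbidden words can be taken finite.

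First I would set up the structure of words in $X(\mathcal{S})$. Every bi-infinite string decomposes into maximal blocks of a single letter, and the constraint is twofold: the block of letter $i$ must have length in $S_i$, and the letters must cycle through $1,2,\ldots,p$ in order (each core word is $1^{m_1}2^{m_2}\cdots p^{m_p}$, and concatenating core words means after a $p$-block we return to a $1$-block). So there are really two kinds of forbidden patterns. The \emph{order} constraints --- a letter $j$ immediately following a letter $i$ with $j \neq i$ and $j \neq i+1 \pmod p$ in the allowed cyclic sense --- are always capturable by a fixed finite list of forbidden two-letter words, exactly as in the $S_i = \mathbb{N}$ case spelled out before the proposition. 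Hence the only source of infinitely many forbidden words is the \emph{length} constraint on individual blocks, and the proposition reduces to analyzing, letter by letter, which length constraints are expressible by finitely many forbidden words.

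For the ``if'' direction I would argue that finiteness or cofiniteness of each $S_i$ makes the length constraint locally checkable with bounded window. If $S_i$ is finite with maximum element $M_i$, then any block of $i$'s of length exceeding $M_i$ is forbidden, which is detected by forbidding the single word $i^{M_i + 1}$ (together with the order words, a too-short or wrong-position block is already handled); more carefully, the admissible block lengths being a finite set means the set of forbidden sub-words $i^k$ for $k \notin S_i$ that are not automatically excluded by longer forbidden blocks is finite. If $S_i$ is cofinite, then only finitely many block lengths are disallowed, and each disallowed length $k \notin S_i$ can be forbidden by a finite word that pins down a maximal $i$-block of exactly length $k$ --- namely a word of the form $a\, i^{k}\, b$ with $a \neq i$ and $b \neq i$, of which there are finitely many since $a,b$ range over the finite alphabet and $k$ over the finite complement of $S_i$. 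Collecting the order words and these finitely many length words gives a finite forbidden list, so $X(\mathcal{S})$ is an SFT.

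For the ``only if'' direction I would prove the contrapositive: if some $S_i$ is neither finite nor cofinite, then $X(\mathcal{S})$ is not an SFT. The clean way is to invoke the standard characterization that a shift is an SFT iff it has a fixed $M$ such that admissibility of a word is determined by its length-$M$ sub-words (equivalently, there is no ``arbitrarily long memory''). Since $S_i$ is infinite, there are arbitrarily long admissible $i$-blocks; since its complement in $\mathbb{N}$ is also infinite, there are arbitrarily long forbidden $i$-blocks interleaved among them. I would make this precise by choosing, for any candidate memory length $M$, two integers $k \in S_i$ and $k' \notin S_i$ with $k, k' > M$, and exhibiting the block $i^{k}$ as admissible while $i^{k'}$ is not, with the two agreeing on all sub-words of length $M$ (both look like runs of $i$'s locally). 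This shows no finite window suffices, so no finite forbidden list can define $X(\mathcal{S})$.

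The main obstacle I anticipate is the bookkeeping in the ``if'' direction: one must be careful that the forbidden words chosen genuinely cut out exactly the intended strings and do not accidentally forbid legitimate core words, particularly at the boundaries where blocks meet (the interaction between the order constraint and the length constraint on the first and last block of a core word). Handling the closure --- the bi-infinite strings that are eventually a single constant letter, which arise when some $S_i$ is infinite --- also needs a remark, since in the cofinite case one must confirm the infinite constant string is admissible and correctly treated by the finite forbidden list. Once the reduction to per-letter block-length constraints is firmly established, each of these checks is routine.
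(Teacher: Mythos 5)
Your ``if'' direction is essentially the paper's proof: the paper likewise takes the union of a fixed order-constraint list $F_0 = \{nm : m \neq n,\, m \neq n+1\} \cup \{pi : 1 < i < p\}$ with, for each finite $S_i$, the words $ai^nb$ ($n \le \max S_i$, $n \notin S_i$) together with $i^{1+\max S_i}$, and for each cofinite $S_i$ the finitely many words $ai^nb$ with $n \in \mathbb{N}\setminus S_i$. Where you genuinely diverge is the ``only if'' direction. The paper simply asserts that if some $S_i$ is neither finite nor cofinite then ``the forbidden word list $F_i$ associated with that set must be infinite'' --- which tacitly assumes the defining forbidden list must take that particular form, whereas an SFT could a priori be presented by some other finite list. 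Your appeal to the $M$-step characterization (admissibility determined by length-$M$ windows) is the standard way to close that gap and is the more rigorous route. One correction to your sketch, though: for $k' \notin S_i$ the bare word $i^{k'}$ \emph{is} admissible, since $S_i$ is infinite and $i^{k'}$ sits inside a longer legal run $i^{k}$ with $k \in S_i$. The locally-admissible-but-globally-forbidden witness must be the delimited maximal block $a\, i^{k'}\, b$ (with $a, b \neq i$ chosen consistently with the order constraint, e.g.\ $a = i-1$ or $a=p$ when $i=1$, and $b = i+1$ or $b=1$ when $i=p$): every length-$M$ subword of $a\, i^{k'}\, b$ already occurs in the admissible word $a\, i^{k}\, b$ for some $k \in S_i$ with $k > M$, yet $a\, i^{k'}\, b$ itself is forbidden. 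With that substitution your argument is complete, and in fact tighter than the paper's.
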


\begin{proof}  First, recall that we are restricting the order in which blocks may appear.  Hence, 
\[ F_0 = \{nm : m \neq n, m \neq n+1, 1 \leq n < p\} \cup \{ pi : 1<i < p\} \]
is forbidden for any $X( \mathcal{S})$.  Next, assume $S_i$ is finite.  Then, if $S_i$ is finite,
\[F_i=\{ai^nb:a,b\in\mathcal{A}\setminus\{i\},n\in\{1,2,...,\max S_i\}\setminus S_i\}\cup\{i^{1+\max S_i}\}\]
is a finite list of forbidden words associated with the set $S_i$. If, on the other hand, $S_i$ is cofinite, then
\[F_i=\{ai^nb:a,b\in\mathcal{A}\setminus\{i\},n\in\mathbb{N}\setminus S_i\}\]
is a finite list of forbidden words associated with the set $S_i$.  Hence, $F = \bigcup_{i=0}^n F_i$ is a finite list of forbidden words, and so $X(\mathcal{S})$ is an SFT. \\
Now, assume that $X(\mathcal{S})$ is an SFT.  Notice that if there exists an $S_i$ that is neither finite nor cofinite, then the forbidden word list $F_i$ associated with that set must be infinite.  Hence, each $S_i \in \mathcal{S}$ must be finite or cofinite.  
\end{proof}

\begin{proposition}  A generalized $\mathcal{S}$-limited shift $X_\mathcal{S}$ is an SFT if and only if every $S_i$ is finite or cofinite for every $S_i \in \mathcal{S}$.  
\end{proposition}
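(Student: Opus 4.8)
The plan is to mirror the proof of Proposition \ref{SFT limited}, exploiting that the only structural difference between $X(\mathcal{S})$ and $X_\mathcal{S}$ is the removal of the order constraint. In Proposition \ref{SFT limited} the forbidden list began with the words $F_0$ that enforced the order $1,2,\dots,p$; since a generalized $\mathcal{S}$-limited shift imposes no such order (the only adjacency requirement is $\omega_i \neq \omega_{i+1}$, which is automatic for maximal blocks), I would simply drop $F_0$ and retain the block-length words. The forward direction then copies the earlier construction, and the genuine content lives entirely in the converse.

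For the forward direction, assume each $S_i$ is finite or cofinite and build a finite forbidden list $F = \bigcup_{i=1}^{p} F_i$. For finite $S_i$ I would take
\[ F_i = \{\, a i^n b : a,b \in \mathcal{A}\setminus\{i\},\ n \in \{1,\dots,\max S_i\}\setminus S_i \,\} \cup \{\, i^{1+\max S_i} \,\}, \]
and for cofinite $S_i$ I would take
\[ F_i = \{\, a i^n b : a,b \in \mathcal{A}\setminus\{i\},\ n \in \mathbb{N}\setminus S_i \,\}. \]
Each $F_i$ is finite (in the cofinite case because $\mathbb{N}\setminus S_i$ is finite), so $F$ is finite. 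The one thing to verify is that the SFT $X_F$ cut out by $F$ equals $X_\mathcal{S}$: a point avoids every word of $F$ exactly when each of its complete maximal blocks of a letter $i$ has length in $S_i$, which is precisely the defining condition for $X_\mathcal{S}$. The single word $i^{1+\max S_i}$ handles over-long blocks when $S_i$ is finite, while no such word is needed—or allowed—when $S_i$ is cofinite, since then arbitrarily long $i$-blocks are legal and only finitely many short lengths are banned. This check is routine and essentially identical to the corresponding step in Proposition \ref{SFT limited}.

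For the converse I would prove the contrapositive: if some $S_i$ is neither finite nor cofinite, then $X_\mathcal{S}$ is not an SFT. Rather than merely observing that the natural list $F_i$ is infinite (which by itself does not rule out some other finite presentation), I would invoke the standard characterization (see \cite{LM}) that an SFT is $M$-step for some $M$, meaning there is an $M$ such that whenever $|b| = M$, the relations $ab \in \mathcal{L}$ and $bc \in \mathcal{L}$ force $abc \in \mathcal{L}$. Fix any $M$ and a letter $j \neq i$. Since $S_i$ is infinite it is unbounded, so short $i$-blocks can always be completed to admissible lengths; since $\mathbb{N}\setminus S_i$ is infinite, I may choose $n' \notin S_i$ with $n' \geq M$. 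Taking $a = j$, $b = i^M$, and $c = i^{\,n'-M} j$, one checks that $ab = j i^M \in \mathcal{L}$ and $bc = i^{n'} j \in \mathcal{L}$, because in each the $i$-run meets the word boundary and may be extended to a maximal block of some length exceeding $n'$ that lies in $S_i$. However $abc = j i^{n'} j \notin \mathcal{L}$, since here the $i$-run is flanked by $j$ on both sides and therefore forms a complete maximal block of the forbidden length $n'$. Thus the $M$-step gluing property fails for every $M$, so $X_\mathcal{S}$ is not an SFT.

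I expect the forward direction to be entirely routine. The main obstacle is the converse: the delicate point is to argue that \emph{no} finite forbidden list can present $X_\mathcal{S}$, not merely that one natural list fails. Phrasing this through the $M$-step characterization and producing, for each $M$, an explicit triple $a,b,c$ witnessing a gluing failure is the crux; the remaining care is to confirm that the end blocks in $ab$ and $bc$ are genuinely realizable in $X_\mathcal{S}$, which follows from the unboundedness and nonemptiness of the relevant limiting sets together with the choice of an auxiliary letter $j \neq i$ (so implicitly $p \geq 2$; for $p = 1$ the shift is a single point and the statement is trivial).
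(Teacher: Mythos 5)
Your proposal is correct, and its forward direction coincides with the paper's entire argument: the paper proves this proposition in one line by saying it "follows exactly as in Proposition \ref{SFT limited}, except we can discard the forbidden word list $F_0$," which is precisely your construction of $F = \bigcup_i F_i$. Where you genuinely depart from the paper is in the converse. The paper (both here and in Proposition \ref{SFT limited}) argues only that if some $S_i$ is neither finite nor cofinite then \emph{the particular forbidden list} $F_i$ is infinite --- which, as you correctly observe, does not by itself exclude the existence of some other finite presentation of the same shift. Your replacement argument via the $M$-step characterization closes this gap: for each $M$ you pick $n' \in \mathbb{N}\setminus S_i$ with $n' \geq M$ (possible since $S_i$ is not cofinite), set $a = j$, $b = i^M$, $c = i^{n'-M}j$ for some letter $j \neq i$ with $S_j \neq \emptyset$, and use the unboundedness of $S_i$ (it is not finite) to realize $ab$ and $bc$ as subwords of legal points while $abc$ forces a complete maximal $i$-block of the forbidden length $n'$. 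This is a strictly more rigorous treatment of the "only if" direction than what the paper records; the cost is a little extra bookkeeping (the choice of $j$, the realizability of the end blocks, and the degenerate case $p=1$, which you flag), and the benefit is that the converse is actually proved rather than suggested.
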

The proof follows exactly as in Proposition \ref{SFT limited}, except we can discard the forbidden word list $F_0$ since we are no longer restricting the order in which the blocks appear.  
\medskip

Next, we aim to find conditions for which an $\mathcal{S}$-limited shift will be sofic.  As a preliminary step, we introduce some terminology and an alternative definition for sofic subshifts.  Let $X$ be a subshift and $\omega \in \mathcal{L}(X)$.  The \textbf{follower set} of $\omega$, denoted $\mathcal{F}_X(\omega)$ is the set of the form $\mathcal{F}_X(\omega)=\{\tau\in \mathcal{L}(X) : \omega\tau\in \mathcal{L}(X)\}$.  A subshift $X$ is sofic if and only if it has a finite number of distinct follower sets.  For more information on sofic subshifts, see \cite{LM}.

We will also use the difference sets associated with each $S_i$.  Let 
\[\Delta(S_i)=\{s_0,d_1,d_2, d_3... : d_i=s_{i}-s_{i-1} \text{ for } i \geq 1\}\]
be the sequence of differences between consecutive entries of $S_i$ for $1 \leq i \leq p$.

\begin{proposition} \label{sofic limited}
$X(\mathcal{S})$ is sofic if and only if for every $S_i\in\mathcal{S}$, $\Delta(S_i)$ is eventually periodic.
\end{proposition}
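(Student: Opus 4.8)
The plan is to use the follower-set characterization of sofic subshifts stated above: $X(\mathcal{S})$ is sofic if and only if it has only finitely many distinct follower sets $\mathcal{F}_{X(\mathcal{S})}(\omega)$. Because every legal word is a concatenation of blocks $1^{m_1}\cdots p^{m_p}$ with $m_i \in S_i$, a bi-infinite point of $X(\mathcal{S})$ consists of a run of $1$'s, then $2$'s, \ldots, then $p$'s, then $1$'s again, cycling through the alphabet, where the run of letter $i$ has length in $S_i$. The first step is an ``interface lemma'': the follower set of a word $\omega$ is determined entirely by its terminal run --- namely by the terminal letter $i$, the length $k$ of the maximal run of $i$ at the right end of $\omega$, and whether or not $\omega$ consists of a single repeated letter. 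Indeed, any legal continuation $\tau$ must first append some number of additional $i$'s (completing the current run to a total length lying in $S_i$) and then proceed through fresh, independent choices from $S_{i+1}, S_{i+2}, \ldots$; nothing to the left of the terminal run constrains $\tau$.

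Next I would classify the follower sets accordingly. For a word whose terminal run of $i$ is preceded, within $\omega$, by a different letter, the run length $k$ is exactly known, so the admissible numbers of additional $i$'s form the shifted tail
\[ R_i(k) = \{\, m - k : m \in S_i,\ m \ge k \,\} = (S_i - k) \cap \mathbb{N}_0, \]
and the follower set is in bijection with the pair $(i, R_i(k))$; distinct $R_i(k)$ yield distinct follower sets, since $R_i(k)$ can be read back off from which words $i^j(i+1)\cdots$ (or $i^j 1 \cdots$ when $i=p$) lie in the follower set. For a single-letter word $\omega = i^k$ the terminal run may extend to the left, so the admissible number of additional $i$'s is instead the interval $\{\, j \ge 0 : k + j \le \sup S_i \,\}$; I would check that, as $k$ varies, this produces only finitely many follower sets for each $i$ (exactly one when $S_i$ is unbounded, and at most $\max S_i$ when $S_i$ is bounded). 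Consequently the single-letter words never contribute infinitely many follower sets, and since $i$ ranges over the finite alphabet, the total number of follower sets is finite if and only if, for each $i$, the family $\{R_i(k) : k \ge 1\}$ is finite.

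The crux is then a purely combinatorial equivalence, which I would isolate as a lemma: for $A \subseteq \mathbb{N}$, the collection of shifted tails $\{(A - k)\cap \mathbb{N}_0 : k \ge 1\}$ is finite if and only if the indicator sequence $\chi_A \in \{0,1\}^{\mathbb{N}}$ is eventually periodic, if and only if the gap sequence $\Delta(A)$ is eventually periodic. The point is that $(A-k)\cap\mathbb{N}_0$ records precisely the tail of $\chi_A$ from position $k$ onward, so the number of distinct shifted tails equals the number of distinct tails of $\chi_A$, and a one-sided sequence has finitely many distinct tails exactly when it is eventually periodic; translating eventual periodicity of $\chi_A$ into eventual periodicity of the consecutive gaps of $A$ is then routine (and the finite case, where $\chi_A$ is eventually $0$, corresponds to $X(\mathcal{S})$ being an SFT via Proposition~\ref{SFT limited}). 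Applying this lemma with $A = S_i$ for each $i$ completes both directions of the biconditional.

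The main obstacle I anticipate is making the interface lemma airtight, especially the single-letter case: one must argue carefully that a run at the end of $\omega$ can be silently extended to the left inside some bi-infinite point, so that the governing set becomes the interval $\{\, j : k+j \le \sup S_i \,\}$ rather than the shifted tail $R_i(k)$, and then verify that this genuinely different family still contributes only finitely many follower sets. Everything else reduces to the clean tail-counting lemma.
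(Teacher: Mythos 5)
Your proposal is correct and follows essentially the same route as the paper: both arguments reduce the follower sets of $X(\mathcal{S})$ to those of words ending in a terminal run $ai^k$, identify $\mathcal{F}(ai^k)$ with the shifted tail $(S_i-k)\cap\mathbb{N}_0$, and observe that finitely many such tails exist precisely when $\Delta(S_i)$ is eventually periodic. Your version is somewhat more careful than the paper's in isolating the tail-counting lemma and in treating the single-letter words $i^k$ separately, but the underlying idea is the same.
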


\begin{proof}
First, notice that a follower set is determined by the last letter of the last block of letters in the finite word.  More specifically, for $\omega \in \mathcal{L}(X(\mathcal{S}))$, $i \in \mathcal{A}$, and any $n \in \mathbb{N}$, we can see that $F(\omega a i^n) = F(ai^n)$, where $a = i-1$ if $2 \leq i \leq p$ and $a = p$ if $i=1$.  Hence, we will only consider follower sets of words of the form $a i^n$ or $i$.\\
Now, assume that $\Delta(S_i)$ is eventually periodic.  If $S_i = \{s_0, s_1, \ldots\}$, then \newline $\Delta(S_i) = \{d_0, d_1, \ldots, d_{k-1}, \overline { m_1, \ldots, m_l}\}$.  Then, assuming $i \in \mathcal{A}$ and $a = i-1$ when $2 \leq i \leq p$ and $a=p$ when $i=1$, the follower sets can take three forms: 
\begin{align*}
&F(i), &1 \leq i \leq p \\
&F(ai^n), & 1\leq n \leq s_{k-1}\\
&F(ai^{s_{k+i-2}+j_i}), &0 \leq j_i \leq m_i-1 \text{ and } 1 \leq i \leq l
\end{align*}
Therefore, given our initial statement regarding the form of follower sets, there can only be at most finitely many follower sets for $X(\mathcal{S})$.  Thus, $X(\mathcal{S})$ is sofic.\\
Next, assume that $X(\mathcal{S})$ is sofic and so $X(\mathcal{S})$ has only finitely many follower sets.  Then, there must exist some integers $m$ and $q$ with $m < q$ such that $F(ai^m) = F(ai^q)$.  Choose the smallest such $q$ for which there exists such an $m$.  Now, let $S_i \cap (m,q] = \{n_1, \ldots, n_t\}$ and $s_{max} = \max \{ s \in S_i : s \leq m\}$.  Then,
\[\Delta(S_i) = \{s_0, s_1-s_0, \ldots, n_1-s_{max}, \overline{n_2-n_1, \ldots, n_t-n_{t-1}, q+n_1-n_t}\}, \]
and so $\Delta(S_i)$ is eventually periodic.  Since this argument is not dependent on the choice of $i$, we conclude that each $\Delta(S_i)$ must be eventually periodic.

\end{proof}

\begin{proposition} A generalized $\mathcal{S}$-limited shift $X_\mathcal{S}$ is sofic if and only if $\Delta(S_i)$ is eventually periodic for every $S_i \in \mathcal{S}$.
\end{proposition}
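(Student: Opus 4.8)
The plan is to closely mirror the proof of Proposition \ref{sofic limited}, adapting the follower-set analysis to the generalized setting where block order is unrestricted. The remark following the second proposition already signals the strategy: the generalized case differs from the ordered case precisely in that there is no forced cyclic succession of letters. So the main structural observation changes. In the ordered $\mathcal{S}$-limited shift, a follower set $\mathcal{F}(\omega)$ was determined by the last block $i^n$ together with the fact that the next letter is forced to be $i+1$ (or $1$ if $i=p$); here, after a terminal block $i^n$, any letter $j \neq i$ may follow, each with its full range $S_j$ available. I would first record this: for $\omega \in \mathcal{L}(X_\mathcal{S})$ ending in a maximal block $i^n$ (that is, $\omega = \tau j^r i^n$ with $j \neq i$, or $\omega = i^n$ at the start), the follower set $\mathcal{F}_{X_\mathcal{S}}(\omega)$ depends only on the pair $(i,n)$, not on the history $\tau$. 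The reason is the same as before: once the current block is $i^n$, what may legally follow is determined entirely by how the block $i^{\cdot}$ may be extended (which depends on which elements of $S_i$ exceed $n$) and by what may be appended after the block closes (which is identical for every terminal block of letter $i$). Thus I would reduce to counting follower sets indexed by pairs $(i,n)$.

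Next I would prove the forward-backward equivalence through these reduced follower sets. For the direction assuming each $\Delta(S_i)$ is eventually periodic: I would argue that eventual periodicity of $\Delta(S_i)$ means the set of ``remaining admissible extensions'' of a block $i^n$, namely $\{s - n : s \in S_i,\ s \geq n\}$ viewed as a pattern, takes only finitely many values as $n$ ranges over $\mathbb{N}$. Concretely, writing $\Delta(S_i) = \{d_0, d_1, \ldots, d_{k-1}, \overline{m_1, \ldots, m_l}\}$, the follower set $\mathcal{F}_{X_\mathcal{S}}(i^n)$ stabilizes into finitely many classes once $n$ passes the preperiodic part, exactly as in the ordered case but now with the same ``post-block'' continuation (all letters $j \neq i$) appended uniformly. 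Since there are finitely many letters $i$ and finitely many classes per letter, $X_\mathcal{S}$ has finitely many follower sets and is therefore sofic. For the converse, I would assume finitely many follower sets, hence for each fixed $i$ find $m < q$ with $\mathcal{F}_{X_\mathcal{S}}(i^m) = \mathcal{F}_{X_\mathcal{S}}(i^q)$, take the smallest such $q$, and read off eventual periodicity of $\Delta(S_i)$ from the equality of these follower sets in precisely the manner of Proposition \ref{sofic limited}.

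The step I expect to be the genuine obstacle is justifying cleanly that $\mathcal{F}_{X_\mathcal{S}}(i^m) = \mathcal{F}_{X_\mathcal{S}}(i^q)$ forces the difference structure of $S_i$ to repeat with period $q - m$, and conversely. In the ordered case, the next letter after a block of $i$'s is forced, so the follower set encodes only ``how this block can be finished and then deterministically continued,'' making the translation to $\Delta(S_i)$ transparent. In the generalized case the continuation branches over all $j \neq i$, so I must argue that this extra branching is identical for $i^m$ and $i^q$ (it manifestly is, since it does not depend on $n$) and therefore that the equality of the two follower sets is equivalent to the equality of the truncated difference patterns $\{s - m : s \in S_i,\ s > m\}$ and $\{s - q : s \in S_i,\ s > q\}$. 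Once that equivalence is isolated, the argument is essentially identical to the ordered case, so I would state it as following \emph{mutatis mutandis} from Proposition \ref{sofic limited}, emphasizing only the one modification: the uniform, order-independent continuation after each maximal block.
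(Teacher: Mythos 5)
Your proposal is correct and follows essentially the same route as the paper, which simply invokes the proof of Proposition \ref{sofic limited} with the single modification that in follower sets of the form $F(ai^n)$ one now allows $a \in \mathcal{A}\setminus\{i\}$. Your additional observation that the post-block continuation is uniform in $n$ (and hence cancels when comparing $\mathcal{F}(i^m)$ with $\mathcal{F}(i^q)$) is exactly the justification the paper leaves implicit.
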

The proof follows from the proof of Proposition \ref{sofic limited} with one small modification.  When considering follower sets of the form $F(ai^n)$, we allow $a\in\mathcal{A} \setminus \{i\}$.  
\medskip

Next, we move on to properties involving the language of an $\mathcal{S}$-limited shift.  First, notice that all $\mathcal{S}$-limited shifts are irreducible and synchronized with synchronizing words of the form $p1$ or $a(a+1)$ where $1 \leq a < p$.  The following example highlights some of the challenges we face when determining which $\mathcal{S}$-limited shifts are mixing. 
\begin{example}
Let $\mathcal{A} = \{1,2\}$ and let $S_1 = S_2 = \{ 2n+1 : n \geq 0\}$.  Notice that $21, 12 \in \mathcal{L}(X(\mathcal{S}))$.  
If $\omega \in \mathcal{L}(X(\mathcal{S}))$ is a word such that $21\omega12 \in \mathcal{L}(X(\mathcal{S}))$, then $\ell(\omega) = 2l+1$ for some $l \geq 0$.  
Hence, for any $N \geq 1$ and $2n \geq N$, there exists no word $\omega \in B_{2n}(X(\mathcal{S}))$ such that $21 \omega 12 \in \mathcal{L}(X(\mathcal{S}))$.
\end{example}

\begin{proposition} \label{mixing}
An $\mathcal{S}$-limited shift $X(\mathcal{S})$ is mixing if and only if $\gcd\{s_1 + \cdots + s_p : s_i \in S_i\} = 1$.
\end{proposition}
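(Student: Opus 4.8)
The plan is to reduce topological mixing to a congruence condition on the set of core-block lengths. Set $L = \{s_1 + \cdots + s_p : s_i \in S_i\}$ and $d = \gcd L$; note that $L$ is exactly the set of lengths of the blocks $1^{s_1}\cdots p^{s_p}$ generating $G_{\mathcal{S}}$, and every point of $X(\mathcal{S})$ is a bi-infinite concatenation of such blocks. The structural fact I would isolate first, using that every exponent is at least $1$, is that the word $p1$ occurs in a point exactly at a boundary between consecutive core blocks: inside a block the letter $p$ is followed only by $p$ or by the block's end, so $p$ is immediately followed by $1$ if and only if a new block begins. Hence the material lying strictly between two such boundaries is a concatenation of whole core blocks, and its length is a sum of elements of $L$, in particular a multiple of $d$. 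Throughout I take $p \geq 2$ so that this marker exists; the case $p = 1$ is degenerate (a single fixed point).

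For the forward implication I would prove the contrapositive. Assume $d \geq 2$ and test mixing on $\omega = \tau = p1$. If $p1\,\xi\,p1 \in \mathcal{L}(X(\mathcal{S}))$, the segment lying between the two boundary markers is the word $1\xi p$, a nonempty concatenation of complete core blocks, so its length $|\xi| + 2$ is divisible by $d$ and $|\xi| \equiv -2 \pmod d$. Because $d \geq 2$, infinitely many lengths $n$ satisfy $n \not\equiv -2 \pmod d$, and for each such $n$ there is no connecting word of length $n$; this defeats the mixing condition. Therefore mixing forces $d = 1$.

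For the converse, assume $d = 1$. The engine here is a standard numerical-semigroup result: some finite subset of $L$ already has gcd $1$, so the semigroup it generates is cofinite in $\mathbb{N}$, and I fix $M$ past which every integer is a sum of elements of $L$. Given $\omega, \tau \in \mathcal{L}(X(\mathcal{S}))$, I would align each to a boundary: choosing an occurrence of $\omega$ in a concatenation of core blocks, append the remainder $u$ of the block containing its last letter to form $\omega^{+} = \omega u$ ending at a boundary, and symmetrically prepend the initial part $w$ of the block containing the first letter of $\tau$ to form $\tau^{-} = w\tau$ beginning at a boundary. With $a = |u|$ and $b = |w|$, put $N = a + b + M$. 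For each $n \geq N$, since $n - a - b \geq M$ is a sum of core-block lengths, I insert complete blocks of that total length between $\omega^{+}$ and $\tau^{-}$; every internal junction is a $p$-to-$1$ boundary, so the result is a legitimate concatenation of core blocks equal to $\omega\,\xi\,\tau$ with $|\xi| = n$, hence an element of $\mathcal{L}(X(\mathcal{S}))$. Thus connecting words of all lengths $n \geq N$ exist, and $X(\mathcal{S})$ is mixing.

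The main obstacle is the sufficiency direction, and within it the number-theoretic step that $\gcd L = 1$ forces the semigroup generated by $L$ to be cofinite (the Chicken McNugget / Frobenius phenomenon), which I would cite rather than reprove. The remaining work is careful bookkeeping: verifying that the boundary-aligned extensions $\omega^{+}$ and $\tau^{-}$ are themselves legitimate words and glue correctly onto the complete intervening blocks, and confirming that the marker argument, and hence the whole equivalence, genuinely relies on the assumption $p \geq 2$.
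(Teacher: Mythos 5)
Your proof is correct and follows essentially the same route as the paper: both arguments exploit the fact that the marker $p1$ occurs exactly at boundaries between core blocks, so that any connector between two such markers is a concatenation of complete blocks with length a sum of elements of $L$, and both invoke the numerical-semigroup (Frobenius) fact for sufficiency. The only differences are organizational---you prove necessity by contrapositive where the paper extracts two consecutive realizable lengths directly, and you build the connecting words for arbitrary $\omega,\tau$ explicitly where the paper appeals to $p1$ being synchronizing together with irreducibility---so no further comparison is needed.
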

\begin{proof}
First, we will assume that $X(\mathcal{S})$ is mixing.  Since $p1 \in \mathcal{L}(X(\mathcal{S}))$, then there exists some $N$ such that for all $n \geq N$, there is a word $\omega \in B_n(X(\mathcal{S}))$ such that $p1\omega p1 \in \mathcal{L}(X(\mathcal{S}))$.  
Since $\omega$ must be of the form $ 1^{s_1 - 1} 2^{s_2} \ldots p^{s_p} \tau_1 \tau_2 \ldots \tau_n 1^{s_1 '}2^{s_2'}\ldots (p-1)^{s_{p-1}'} p^{s_p'-1}$, where $\tau_k \in G_\mathcal{S}$ for $1 \leq k \leq n$ and $s_i, s_i' \in S_i$ for $1 \leq i \leq p$.  
Hence, there exist words of this form, say $\omega_1$ and $ \omega_2$, of lengths $n$ and $n+1$, respectively.  
Thus, there exist words $1\omega_1p$ and $1\omega_2p$ of lengths $n+2$ and $n+3$ (respectively), where both $1\omega_1p$ and $1\omega_2p$ consist of concatenated blocks from $G_\mathcal{S}$.  Hence, $\gcd\{ s_1 + \cdots + s_p : s_i \in S_i\} = 1$.  \\
Next, we assume $\gcd \{s_1+\cdots + s_p : s_i \in S_i\} =1$.  
Then, there exists some sufficiently large $N$ such that for all $n \geq N$, there exists a word of length $n$ of the form $\tau_1 \cdots \tau_{m}$, where $\tau_i \in G_\mathcal{S}$ for all $1 \leq i \leq m$.  Hence, $p\tau_1 \cdots \tau_m 1 \in B_{n+2}(X(\mathcal{S}))$.  Since $p1$ is synchronizing and $X(\mathcal{S})$ is irreducible, then $X(\mathcal{S})$ must be mixing.
\end{proof}

A generalized $\mathcal{S}$-limited shift does not have a core set analogous to the set $G_\mathcal{S}$ in the $\mathcal{S}$-limited shift setting.  To parallel our use of $G_\mathcal{S}$ in the previous proof, we can think of words from $X_\mathcal{S}$ in terms of non-repeating blocks, that is, we consider finite words of the form $a_1^{n_1} \tau_1 \tau_2 \ldots \tau_m a_2^{n_2}$, where $a_1, a_2 \in \mathcal{A}$, $n_1, n_2 \in \mathbb{N}$, and $\tau_i = a_{i1}^{s_{i1}}a_{i2}^{s_{i2}}\ldots a_{ik}^{s_{ik}}$, with $a_{ij} \neq a_{il}$ for $j \neq l$ and $s_{ij} \in S_{a_{ij}}$.  

\begin{proposition} 
A generalized $\mathcal{S}$-limited shift $X_\mathcal{S}$ is mixing if and only if \newline $\gcd\left \{\sum_{i=1}^k s_{a_i} : s_{a_i} \in S_{a_i}, a_i \neq a_j (\text{for } i\neq j), 2\leq k \leq p \right \} =1$.  
\end{proposition}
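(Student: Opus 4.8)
The plan is to mirror the structure of the proof of Proposition~\ref{mixing} for the ordered $\mathcal{S}$-limited shift, adapting it to the generalized setting where blocks may appear in any order. The key conceptual shift is that, in place of the single core block $G_\mathcal{S}$, we now build \emph{fillers} out of non-repeating blocks $\tau_i = a_{i1}^{s_{i1}} \cdots a_{ik}^{s_{ik}}$ as described in the paragraph preceding the statement; the lengths such a filler can realize are exactly the values $\sum_{i=1}^k s_{a_i}$ with the $a_i$ pairwise distinct (over a run of $k$ distinct letters), and the gcd in the statement is precisely the gcd of the set of all achievable block-sums. As in the ordered case, irreducibility and the existence of a synchronizing word are the backbone: I would first verify that $X_\mathcal{S}$ is irreducible and pin down a suitable synchronizing word so that mixing reduces to a statement about which \emph{lengths} of connecting words are achievable.

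First I would establish the forward direction. Assume $X_\mathcal{S}$ is mixing. Fix a single letter $a$ and the length-two word $ba$ for some $b \neq a$ that is guaranteed to lie in the language; mixing then supplies an $N$ so that for every $n \geq N$ there is a connecting word $\omega \in B_n$ with $ba\,\omega\,ba \in \mathcal{L}(X_\mathcal{S})$. Parsing $\omega$ (together with the flanking letters) into non-repeating blocks as above, I would read off that every sufficiently large integer is expressible, up to a bounded additive constant absorbed by the boundary blocks, as a sum $\sum_{i=1}^k s_{a_i}$ drawn from the allowed configurations. Having two connecting words of consecutive lengths $n$ and $n+1$ then forces the gcd of the achievable block-sums to be $1$, which is exactly the claimed condition (the restriction $2 \le k \le p$ accounts for the fact that a single-letter run cannot serve as a genuine connector between two occurrences of the synchronizing pattern, since consecutive blocks must use distinct letters).

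For the converse, assume $\gcd\left\{\sum_{i=1}^k s_{a_i} : s_{a_i} \in S_{a_i},\ a_i \neq a_j \text{ for } i \neq j,\ 2 \leq k \leq p\right\} = 1$. By the Chicken McNugget / numerical-semigroup principle, once the gcd of a set of attainable lengths is $1$, all sufficiently large integers are attainable as nonnegative combinations; concatenating admissible non-repeating blocks (and checking that adjacent blocks can always be chosen to use distinct letters, so the concatenation stays in $\mathcal{L}(X_\mathcal{S})$) then produces, for all large $n$, a word of length $n$ built from blocks. Flanking this by the synchronizing word and invoking irreducibility exactly as in Proposition~\ref{mixing} yields mixing.

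The main obstacle I anticipate is the combinatorial bookkeeping in the adjacency constraint $a_i \neq a_{i+1}$: unlike the ordered case, where $G_\mathcal{S}$-blocks concatenate freely, here I must ensure that when I splice together blocks to hit a prescribed length I never create an illegal repeated letter at a seam, and that the boundary letters adjacent to the synchronizing word also respect this. This requires choosing the synchronizing word and the first/last letters of the filler with some care, and verifying that the finitely many achievable ``short'' block-sums that generate the gcd can actually be concatenated in a legal order to realize every large length — in other words, that the realizable lengths genuinely form (for large values) the full numerical semigroup generated by the set inside the gcd, rather than some sparser set cut out by the distinctness constraint. Establishing this realizability lemma is where the real work lies; the gcd computation and the appeal to irreducibility are then routine.
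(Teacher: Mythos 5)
Your proposal follows essentially the same route as the paper's proof: for the forward direction, take the word $a_1a_2$ and use mixing to obtain connecting words of consecutive lengths built from complete runs, forcing the gcd to equal $1$; for the converse, use the gcd condition to realize every sufficiently large length by concatenating runs, then flank by the synchronizing word $a_ia_j$ and invoke irreducibility. The ``realizability lemma'' you flag (managing the $a_i \neq a_{i+1}$ constraint at the seams) is precisely the step the paper itself asserts without detail, so your sketch matches the published argument in both structure and level of rigor.
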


\begin{proof}
First, assume $X_\mathcal{S}$ is mixing.  Let $a_1$, $a_2 \in \mathcal{A}$ with $a_1 \neq a_2$.  Then, $a_1a_2 \in \mathcal{L}(X_\mathcal{S})$ and hence there exists $N$ such that for all $n \geq N$, there is a word $\omega \in B_n(X_\mathcal{S})$ such that $a_1 a_2 \omega a_1 a_2 \in \mathcal{L}(X_\mathcal{S})$. 
 Notice that $\omega$ must be of the form $\omega = a_2^{s_{a_1}-1} a_1^{s_{a_2}-1}$ or $\omega = a_2^{s_{a_1}-1} \tau_1 \tau_2 \ldots \tau_m a_1^{s_{a_2}-1}$ where $\tau_i = a_i^{s_i}$ with $s_i \in S_{a_i}$ for $1 \leq i \leq m$.  Hence, there exist words $\omega_1 \in B_n(X_\mathcal{S})$ and $\omega_2 \in B_{n+1}(X_\mathcal{S})$ of this form.  
Therefore, there exist words $a_2 \omega_1 a_1 \in B_{n+2}(X_\mathcal{S})$ and $a_2 \omega_2 a_1 \in B_{n+3}(X_\mathcal{S})$, and each word consists of concatenated blocks of $i$'s of length $s_i \in S_i$ where $i \in \mathcal{A}$.  Since $a_1$ and $a_2$ were chosen arbitrarily, then \newline $\gcd\left \{\sum_{i=1}^k s_{a_i} : s_{a_i} \in S_{a_i}, a_i \neq a_j (\text{for } i\neq j), 2\leq k \leq p \right \} =1$.
\medskip

Next, assume $\gcd\left \{\sum_{i=1}^k s_{a_i} : s_{a_i} \in S_{a_i}, a_i \neq a_j (\text{for } i\neq j), 2\leq k \leq p \right \} =1$.  Then, there exists some sufficiently large $N$ such that for all $n \geq N$, there exists a word $a_1^{s_{a_1}}a_2^{s_{a_2}} \ldots a_m^{s_{a_m}} \in B_n(X_\mathcal{S})$, where $a_i \in \mathcal{A}$, $a_i \neq a_{i+1}$ for $1 \leq i < m$ and $s_{a_1} \in S_{a_1}$.  Hence, $a_0  a_1^{s_{a_1}}a_2^{s_{a_2}} \ldots a_m^{s_{a_m}} a_{m+1} \in B_{n+2}(X_\mathcal{S})$, where $a_0 \neq a_1$ and $a_m \neq a_{m+1}$.  Notice that $a_i a_j$ is synchronizing for all $a_i \neq a_j$ with $a_i, a_j \in \mathcal{A}$.  Since $X_\mathcal{S}$ is irreducible, then $X_\mathcal{S}$ must be mixing.

\end{proof}

\section{Entropy}

In the remaining sections, we will only work with $\mathcal{S}$-limited shifts.  Our current methods would require significant modifications to be extended to the generalized $\mathcal{S}$-limited shift setting due to the fact that they rely on the existence of a good core set, $G_\mathcal{S}$. 
\medskip

Entropy is a conjugacy invariant that is often sought in symbolic dynamics.  While entropy of SFTs and sofic subshifts is well-understood (see \cite{LM}) due to the existence of adjacency matrices in these settings, entropy calculations can vary in more general shift space settings.  Entropy calculations exist for both $S$-gap shifts and $(S, S')$-gap shifts (see \cite{LM} and \cite{SPrimegap}).

\begin{theorem} \label{entropy}
Let $\mathcal{S}=\{S_1,S_2,...,S_p\}$ with $S_i\subseteq\mathbb{N}$ for $1\leq i \leq p$. Then, the entropy of $X(\mathcal{S})$ is $\log \frac{1}{\lambda}$, where $\lambda$ is the unique positive solution to $\displaystyle\sum_{\omega \in G_\mathcal{S}}x^{|\omega|}=1$.
\end{theorem}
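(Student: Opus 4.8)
The plan is to count the words of $X(\mathcal{S})$ by exploiting the fact that every point decomposes uniquely into core blocks, and to extract the exponential growth rate through the generating function of $G_\mathcal{S}$. Write $g_\ell = |\{\omega \in G_\mathcal{S} : |\omega| = \ell\}|$ and $G(x) = \sum_{\omega \in G_\mathcal{S}} x^{|\omega|} = \sum_{\ell} g_\ell x^\ell$. Since each $\omega \in G_\mathcal{S}$ has the form $1^{m_1}\cdots p^{m_p}$ with every $m_i \ge 1$, we have $g_\ell = 0$ for $\ell < p$ and $g_\ell$ is bounded by the number of compositions of $\ell$ into $p$ positive parts, which is polynomial in $\ell$; hence this power series has radius of convergence at least $1$. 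First I would verify that $G$ is continuous and strictly increasing on its domain with $G(0)=0$ and $\sup G \ge 1$ (using $|G_\mathcal{S}| \ge 1$), so that $G(x)=1$ has a unique positive solution $\lambda \in (0,1]$, with $G(x) < 1$ exactly for $0 \le x < \lambda$. This $\lambda$ is the one in the statement.

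The second ingredient is that the decomposition of any word into core blocks is unique: because every block runs through all of the letters $1,2,\ldots,p$ in increasing order, a block boundary occurs precisely at each descent from $p$ back to $1$, so the boundaries are forced by the word itself. Let $c_n$ denote the number of length-$n$ words that are exact concatenations of complete core blocks (aligned at both ends), with $c_0 = 1$. The renewal relation $c_n = \sum_\ell g_\ell\, c_{n-\ell}$ gives $C(x) := \sum_{n\ge 0} c_n x^n = (1 - G(x))^{-1}$. Since $G(x) < 1$ for $x < \lambda$ while $C(x) \to \infty$ as $x \to \lambda^-$, and the $c_n$ are nonnegative, the radius of convergence of $C$ is exactly $\lambda$; thus $\limsup_n c_n^{1/n} = 1/\lambda$.

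With this in hand I would sandwich the true word-count $N_n = |B_n(X(\mathcal{S}))|$, whose normalized logarithm converges to $h(X(\mathcal{S}))$ by subadditivity ($N_{m+n}\le N_m N_n$). For the lower bound, every aligned word genuinely lies in the language, so $N_n \ge c_n$ and hence $h(X(\mathcal{S})) \ge \limsup_n \tfrac1n \log c_n = \log(1/\lambda)$. For the upper bound, I would split an arbitrary $w \in B_n$ along its unique block structure as $w = \sigma\, u\, \pi$, where $\sigma$ is a suffix of one core block, $u$ is an aligned concatenation counted by $c_{|u|}$, and $\pi$ is a prefix of one core block; this assignment is injective. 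The number of distinct block-suffixes (and block-prefixes) of a given length $s$ is again polynomial in $s$, since such a partial block is determined by one run length together with at most $p-1$ further exponents. Summing over the $O(n^2)$ choices of overhang lengths $|\sigma|$ and $|\pi|$ gives $N_n \le \mathrm{poly}(n)\cdot \max_{m \le n} c_m$, and bounding $c_m \le C_\varepsilon (1/\lambda + \varepsilon)^m$ yields $h(X(\mathcal{S})) \le \log(1/\lambda + \varepsilon)$ for every $\varepsilon > 0$.

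The step I expect to be the crux is this upper bound, and specifically controlling the two partial ``overhang'' blocks at the ends of $w$. When some $S_i$ is infinite the core blocks have unbounded length, so the overhangs can be long; the observation that rescues the argument is that an overhang of length $s \le n$ can take only polynomially many distinct values, so these boundary effects contribute a subexponential factor that washes out in the limit of $\tfrac1n \log N_n$. Letting $\varepsilon \to 0$ then matches the two bounds and gives $h(X(\mathcal{S})) = \log(1/\lambda)$.
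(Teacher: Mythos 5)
Your proposal is correct and follows essentially the same route as the paper's proof: both hinge on the unique decomposition of words into core blocks, the generating function $\sum_{\omega\in G_\mathcal{S}}x^{|\omega|}$ together with the renewal identity (the paper writes it as $F_kF_l=F_{k+l}$ and $\sum_k F_k=\sum_k F_1^k$, you as $C(x)=(1-G(x))^{-1}$), a polynomial bound on the boundary ``overhang'' words, and a sandwich on $\#B_n$. The only differences are cosmetic --- you make the subadditivity and the $\limsup c_n^{1/n}$ extraction explicit where the paper argues via the radius of convergence of $H(z)$ --- and your overhang count should be stated for arbitrary factors of a core block (so as to cover a word lying strictly inside a single block), which your own parameterization by at most $p$ exponents already handles.
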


\begin{proof}
Consider the generating function $H(z)=\displaystyle\sum_{m=1}^\infty (\#B_m)z^m$, where $\#B_m$ denotes the cardinality of  $B_m(X(\mathcal{S}))$. First, we claim that the radius of convergence of $H(z)$ is $e^{-h(X(\mathcal{S}))}$.

To see this, we will  show that $\displaystyle\lim_{m\to\infty}\sqrt[m]{|\#B_mz^m|}=1$ when $z=e^{-h(X(\mathcal{S}))}$. First, notice that
 \[
e^{-h(X(\mathcal{S}))} = e^{-\lim_{m\to\infty}\frac{\log\#B_m}{m}}     =\lim_{m\to\infty}\left(\frac{1}{\#B_m}\right)^{1/m} .     
\]

By letting $z = e^{-h(X(\mathcal{S}))}$, we obtain
\[\lim_{m\to\infty}\left(\#B_mz^m\right)^{1/m}=\lim_{m\to\infty} (\#B_m)^{1/m} \left(\frac{1}{\#B_m}\right)^{1/m}=1.\]
Hence, $e^{-h(X(\mathcal{S}))}$ must be the radius of convergence of $H(z)$.
Next, we define the following values which depend on $m$ and $k$: \[
A_m^k=\#\{\omega\in B_m(X(\mathcal{S})):\omega=\tau_1\tau_2\cdots\tau_k\text{, where }\tau_i\in G_\mathcal{S} \text{ for }1\leq i\leq k\}.
\]
Using this, we can define the collection of functions \[
F_k(z)=\sum_{m=1}^\infty A_m^kz^m.
\]
Notice that $F_1(z)=\displaystyle\sum_{\omega \in G_\mathcal{S}}z^{|\omega|}=1$.  It remains to show that the unique positive solution to $F_1(z)=1$ is equal to the radius of convergence of $H(z)$, so that $e^{-h(X(\mathcal{S}))} = \lambda$ and hence $\log\left ( \frac{1}{\lambda}\right) = h(X(\mathcal{S}))$.

First, we will show that $F_k(z)F_l(z)=F_{k+l}(z)$ for all $k,l\in\mathbb{N}$. Let $m\in\mathbb{N}$ and consider the coefficient on the $z^m$ term of $F_{k+l}(z)$, which corresponds to all words $\omega$ formed by concatenating $k+l$ words from $G_\mathcal{S}$ such that $\ell(\omega) = m$.  Set $\omega = xy$, where $x$ is composed of $k$ concatenated words from $G_\mathcal{S}$ and $y$ is composed of  $l$ concatenated words from $G_\mathcal{S}$. Then, we know that $\ell(y)=m-\ell(x)$. Therefore, the coefficient on $z^m$ in $F_{k+l}(z)$ is $\displaystyle\sum_{i=0}^nA_{i}^kA_{m-i}^l$, which is the coefficient on $z^m$ in $F_k(z)F_l(z)$. Therefore, $F_k(z)F_l(z)=F_{k+l}(z)$ as claimed. From this, we notice $\displaystyle\sum_{k=1}^\infty F_k(z) = \sum_{k=1}^\infty (F_1(z))^k$, which converges for all 
$z$ for which $F_1(z) < 1$, so its radius of convergence is equal to the positive value of $z$ for which $F_1(z)=1$.

Next, we look to establish a relation between $F_1(z)$ and $H(z)$. While it is clear that $\displaystyle\sum_{k\geq 1}A_m^k \leq\#B_m$, we find that words $\omega\in B_m(X(\mathcal{S}))$ can take three forms, where in all cases if some word $x$ is not specifically in $G_\mathcal{S}$, then it cannot have any subwords in $G_\mathcal{S}$:
\begin{enumerate}
\item $\omega = x$ where $|x|=n$
\item $\omega\in A_n^k$
\item $\omega = x\tau y$ where $|x|=i$, $|y|=j$, $|\tau|=n-i-j$ and $\tau$ is the concatenation of $k$ words in $G_\mathcal{S}$
\end{enumerate}

The first question that is raised is, how many allowable words of length $m$ can have no subwords in $G_\mathcal{S}$. The first thing to notice is that there must be fewer than $2p$ transitions from one letter to another of a different value, otherwise a substring of $G_\mathcal{S}$ must appear since our alphabet contains $p$ letters.
 Now, we can create an upper bound for the number of such words by summing over the number of transitions between different letters. There are at most $\displaystyle p\sum_{t=0}^{2p}\binom{m}{t}$ such words, where $p$ corresponds to the number of options for a first letter and $t$ corresponds to the positions for the transitions. 
Then, we can see that is bounded above by $2p^2 m^{2p}$. Now, we define the sequence $W_m$ to be the number of allowable words of length $m$ that have no subwords in $G_\mathcal{S}$, and based on this upper bound, we can use the root test to see that the radius of convergence of $\displaystyle\sum_{m=1}^\infty W_mz^m$ is at least 1.

Using this, we can construct the following inequality: \[
\sum_{k\geq 1}A_m^k\leq \#B_m\leq W_m+\sum_{k\geq 1}W_k\sum_{i\geq 0}W_i\sum_{j\geq 0}A_{m-i-j}^k.
\]

Using that chain of inequalities, we obtain the following inequalities: \[
\sum_{k\geq 1}F_k(z)\leq H(z)\leq \sum_{m\geq 1}W_mz^m+\sum_{k\geq 1}F_k(z)\left(\sum_{i\geq 0}W_iz^i\right)\left(\sum_{j\geq 0}W_jz^j\right).
\]

Here, we can see that $H(z)$ will converge if and only if $\displaystyle\sum_{k\geq 1}F_k(z)$ converges, since above we showed that happens only if $F_1(z) < 1$ and all of the other sums on the rightmost side of the inequality have a radius of convergence of 1 so we do not need to worry about their effect on convergence. Therefore, the radius of convergence of $H(z)$ is equal to the unique positive solution to $F_1(z)=1$, which we have already shown proves our original claim.
\end{proof}

Now that we have established entropy calculations for $\mathcal{S}$-limited shifts, we can classify all shifts of this type.  
\begin{proposition}
All $\mathcal{S}$-limited shifts are almost sofic.
\end{proposition}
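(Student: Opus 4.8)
The plan is to approximate $X(\mathcal{S})$ from the inside by a sequence of SFTs obtained by truncating each limiting set. For $n \in \mathbb{N}$ put $S_i^{(n)} = S_i \cap \{1,\dots,n\}$ and $\mathcal{S}^{(n)} = \{S_1^{(n)},\dots,S_p^{(n)}\}$, and write $Y_n = X(\mathcal{S}^{(n)})$. For $n$ large enough every $S_i^{(n)}$ is nonempty and finite, so Proposition \ref{SFT limited} guarantees that $Y_n$ is an SFT. Since $G_{\mathcal{S}^{(n)}} \subseteq G_\mathcal{S}$, any bi-infinite concatenation of blocks from $G_{\mathcal{S}^{(n)}}$ is also a concatenation of blocks from $G_\mathcal{S}$, and taking closures preserves this inclusion, so $Y_n \subseteq X(\mathcal{S})$. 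If every $S_i$ is finite then $X(\mathcal{S})$ is itself an SFT and there is nothing to prove, so I would assume that $G_\mathcal{S}$ is infinite; then $h(X(\mathcal{S})) > 0$ and the root $\lambda$ below lies in $(0,1)$.

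It then remains to prove the entropy convergence $h(Y_n) \to h(X(\mathcal{S}))$, since this lets us take $Y = Y_n$ for $n$ large once $\varepsilon > 0$ is given. By Theorem \ref{entropy}, $h(X(\mathcal{S})) = \log(1/\lambda)$ and $h(Y_n) = \log(1/\lambda_n)$, where $\lambda$ and $\lambda_n$ are the unique positive roots of
\[ f(x) := \sum_{\omega \in G_\mathcal{S}} x^{|\omega|} = 1 \qquad\text{and}\qquad f_n(x) := \sum_{\omega \in G_{\mathcal{S}^{(n)}}} x^{|\omega|} = 1. \]
Factoring over the letters gives $f(x) = \prod_{i=1}^p \big(\sum_{m \in S_i} x^m\big)$, which is continuous and strictly increasing on $(0,1)$, and similarly for each $f_n$. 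The key structural observations are that $f_n \leq f$ with $f_n \nearrow f$ pointwise on $(0,1)$ (monotone convergence of nonnegative terms), and that $f_n$ is nondecreasing in $n$. Because $f_n \leq f$ and $f_n(\lambda_n) = 1 = f(\lambda)$ with $f_n$ increasing, evaluating at $\lambda$ yields $f_n(\lambda) \leq f(\lambda) = 1 = f_n(\lambda_n)$, and hence $\lambda_n \geq \lambda$.

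To finish I would show $\lambda_n \searrow \lambda$. The sequence $(\lambda_n)$ is nonincreasing (as $f_n$ increases with $n$) and bounded below by $\lambda$, so it converges to some $\lambda_\infty \geq \lambda$; if $\lambda_\infty > \lambda$, choose $x_0 \in (\lambda, \lambda_\infty)$, so that $f(x_0) > 1$ by strict monotonicity, while $x_0 \leq \lambda_n$ gives $f_n(x_0) \leq f_n(\lambda_n) = 1$ for all $n$, contradicting $f_n(x_0) \to f(x_0) > 1$. Hence $\lambda_\infty = \lambda$, so $h(Y_n) = \log(1/\lambda_n) \to \log(1/\lambda) = h(X(\mathcal{S}))$. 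I expect the main obstacle to be exactly this last step: the set-up, the containment $Y_n \subseteq X(\mathcal{S})$, and the SFT property are routine, but controlling how the dominant root $\lambda_n$ of the truncated characteristic equation moves as the limiting sets are enlarged — that is, verifying the entropy is continuous under truncation — is the heart of the argument, and it is where the monotonicity of the $f_n$ and of their roots does the real work.
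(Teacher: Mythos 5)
Your argument is correct and follows essentially the same route as the paper: truncate each $S_i$ to a finite set, invoke Proposition \ref{SFT limited} to get an SFT inside $X(\mathcal{S})$, and use Theorem \ref{entropy} to see that the entropies converge. The only difference is that the paper simply asserts the convergence $h(X(\mathcal{S}\vert_n))\to h(X(\mathcal{S}))$, whereas you supply the (correct) monotonicity argument for the roots $\lambda_n\searrow\lambda$ that justifies it.
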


\begin{proof}
Let $X(\mathcal{S})$ be an $\mathcal{S}$-limited shift with $\mathcal{S} =\{S_1, \ldots, S_p\}$.  
Consider the $\mathcal{S}$-limited shift $X(\mathcal{S}\vert_n)$ defined by $\mathcal{S}\vert_n = \{S_1\vert_n, S_2\vert_n, \ldots, S_p\vert_n\}$, where $S_i\vert_n$ consists of the first $n$ entries of $S_i$ for $1\leq i \leq p$. By Proposition \ref{SFT limited}, $X(\mathcal{S}\vert_n)$ is an SFT.   By Theorem \ref{entropy}, entropy of an $\mathcal{S}$-limited shift is given by $\displaystyle\sum_{\omega \in G_\mathcal{S}}x^{|\omega|}=1$, and so we can choose $n$ large enough such that $h(X(\mathcal{S}\vert_n))$ is arbitrarily close to $h(X(\mathcal{S}))$.

\end{proof}

As mentioned earlier, (topological) entropy has a simplified definition for symbolic dynamic spaces.  The measure-theoretic entropy of $\sigma$ is denoted by $h_\mu(\sigma)$ where $\mu \in \mathcal{M}_\sigma(X)$ and $\mathcal{M}_\sigma(X)$ denotes the space of shift invariant probability measures on $X$.  While we will not state the full definition of measure-theoretic entropy, we will establish the connection between $h(X)$ and $h_\mu(\sigma)$.  The variational principle gives the following relationship:
\[h(X) = \sup\{h_\mu(\sigma) : \mu \in \mathcal{M}_\sigma(X)\}.\]
A shift space $X$ is \textbf{intrinsically ergodic} if there exists a  unique measure $\mu \in \mathcal{M}_\sigma(X)$ that attains the supremum in the variational principle.  For more information on entropy and intrinsically ergodic systems, see \cite{Walters}.

\begin{theorem}
Every subshift factor of an $\mathcal{S}$-limited shift is intrinsically ergodic.
\end{theorem}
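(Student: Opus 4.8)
The plan is to recognize this as an instance of the Climenhaga--Thompson criterion for intrinsic ergodicity of a shift space \emph{and all of its factors}, and to produce for $X(\mathcal{S})$ a language decomposition to which that criterion applies. The genuinely new content is the behavior under factor maps: intrinsic ergodicity of $X(\mathcal{S})$ itself is comparatively standard since $X(\mathcal{S})$ is irreducible and synchronized, but intrinsic ergodicity need not be inherited by factors in general. Climenhaga and Thompson's theorem remedies this: if the language $\mathcal{L} = \mathcal{L}(X(\mathcal{S}))$ admits a decomposition $\mathcal{L} = C^p\,\mathcal{G}\,C^s$ --- meaning every $w\in\mathcal{L}$ factors as $w = uvx$ with $u\in C^p$, $v\in\mathcal{G}$, $x\in C^s$ --- such that (I) $\mathcal{G}$ has the specification property, (II) the exponential growth rate of $C^p\cup C^s$ is strictly below $h(X(\mathcal{S}))$, and (III) their technical persistence condition on $\mathcal{G}$ holds, then every subshift factor of $X(\mathcal{S})$ is intrinsically ergodic. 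So the proof reduces to exhibiting such a decomposition and checking (I)--(III).

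The decomposition is dictated by the block structure. Set $\mathcal{G}$ to be the collection of all finite concatenations of core blocks from $G_\mathcal{S}$ (the words beginning and ending at a block boundary), together with the empty word; let $C^s$ be the collection of prefixes of core words, and let $C^p$ be the collection of all factors of a single core word. Within any element of $G_\mathcal{S}$ the letters are nondecreasing, and a new block is marked precisely by a $p1$ transition (here I use $p\ge 2$; the case $p=1$, where $X(\mathcal{S})$ is a single point, is trivial), so every $w\in\mathcal{L}$ splits at its first and last internal block boundaries into an initial partial block, a central run of complete blocks, and a final partial block, giving $w=uvx$ with $u\in C^p$, $v\in\mathcal{G}$, $x\in C^s$; words containing at most one boundary are absorbed into $C^p C^s$ with $v$ empty. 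Hence $\mathcal{L}=C^p\,\mathcal{G}\,C^s$.

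Conditions (I) and (II) are then clean. For (I), any $v,w\in\mathcal{G}$ satisfy $vw\in\mathcal{G}\subseteq\mathcal{L}$, since $v$ terminates in the letter $p$, $w$ opens with the letter $1$, and the transition $p1$ is always admissible; thus $\mathcal{G}$ has (weak) specification with gap $0$. For (II), both $C^p$ and $C^s$ consist of factors of single core words, and a factor of $1^{m_1}\cdots p^{m_p}$ of length $n$ is determined by a composition of $n$ into at most $p$ parts, so the number of length-$n$ words in $C^p\cup C^s$ is $O(n^{p-1})$; its growth is polynomial, hence its entropy is $0$. By Theorem~\ref{entropy}, $h(X(\mathcal{S}))=\log(1/\lambda)>0$ whenever $|G_\mathcal{S}|\ge 2$, i.e. whenever some $S_i$ has at least two elements, which gives (II); in the remaining case every $S_i$ is a singleton, $X(\mathcal{S})$ is a single periodic orbit, and intrinsic ergodicity of it and its factors is immediate.

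The main obstacle is matching the exact hypotheses of the factor version of the Climenhaga--Thompson theorem: pinning down condition (III) and the precise flavor of specification required, and citing the theorem correctly. I expect (III) to be routine here because $\mathcal{G}$ is literally closed under concatenation, so the needed robustness of $\mathcal{G}$ under small edits at its endpoints follows from the rigidity of the $p1$ marker. The point demanding the most care is the non-mixing situation: when $d=\gcd\{s_1+\cdots+s_p : s_i\in S_i\}>1$, all words of $\mathcal{G}$ have length in $d\mathbb{Z}$, gluing is possible only with gaps in $d\mathbb{Z}$, and $X(\mathcal{S})$ fails to be mixing (Proposition~\ref{mixing}); I must therefore use the \emph{weak} (non-uniform) specification that gap-$0$ concatenation already supplies, rather than a mixing-forcing specification, so that the hypotheses still apply in this periodic setting. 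Granting the correct citation of the factor theorem, the verification of (I)--(III) completes the proof.
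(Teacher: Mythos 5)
Your proposal follows essentially the same route as the paper: the same Climenhaga--Thompson criterion applied to the same decomposition $\mathcal{L} = \mathcal{C}^P\, G_\mathcal{S}^*\, \mathcal{C}^S$ (partial leading block, complete core blocks, partial trailing block), with specification of $G_\mathcal{S}^*$ via free concatenation across the $p1$ boundary and a polynomial count of the boundary collections. Your version is in fact slightly more careful than the paper's in two spots --- the sharper $O(n^{p-1})$ bound and the explicit handling of the degenerate case $h(X(\mathcal{S}))=0$, where the strict inequality in condition (II) would fail --- while leaving condition (III) asserted rather than verified, though the verification is the same two-line finiteness argument the paper gives.
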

\begin{proof}
Consider the following decomposition of $\mathcal{L}(X(\mathcal{S}))$:
\begin{align*}
G_\mathcal{S}^* = &\{\tau_1\tau_2 \ldots \tau_n : \tau_i \in G_\mathcal{S} \text{ for } 1 \leq i \leq n\} \\
\mathcal{C}^P = &\{ l^{n} (l+1)^{s_{l+1}}\cdots p^{s_{p}} : 1< l \leq p, s_i \in S_i \text{ for } l < i \leq p, n \in \mathbb{N}\}\\
 \cup &\{ 1^{n} 2^{s_2}\cdots p^{s_{p}} : n \in \mathbb{N}\setminus S_1, s_i \in S_i \text{ for } 2 \leq i \leq p\} \\
\mathcal{C}^S = &\{ 1^{s_1} \cdots (k-1)^{s_{k-1}} k^n : 1 \leq k \leq p, s_i \in S_i \text{ for } 1\leq i <k, n \in \mathbb{N}\} \\
\cup &\{ 1^{s_1} 2^{s_2}\cdots (p-1)^{s_{p-1}}p^n : n \in \mathbb{N}\setminus S_p, s_i \in S_i \text{ for } 1 \leq i \leq p-1\}
\end{align*}
Notice that $\mathcal{L} = \mathcal{C}^P G_\mathcal{S}^* \mathcal{C}^S$, i.e. for every $\omega \in \mathcal{L}$, $\omega = \xi_P \xi_G \xi_S$, where $\xi_P \in \mathcal{C}^P$, $\xi_G \in G_\mathcal{S}^*$, and $\xi_S \in \mathcal{C}^S$.  In order to use the results from \cite{CT}, we show that this decomposition satisfies three properties:
\begin{enumerate}[(i)]

\item For any $\tau_1, \ldots, \tau_m \in G_\mathcal{S}^*$, $\tau_1\tau_2\ldots\tau_m \in G_\mathcal{S}^*$, so $G_\mathcal{S}^*$ has specification.  

\item We will show that $\tilde{h}(\mathcal{C}^P \cup \mathcal{C}^S) = 0$, where $\tilde{h}(\cdot)$ denotes the growth rate $\tilde{h}(X) = \frac{1}{n}\log(B_n(X))$. We claim that the cardinalities of both $\mathcal{C}^S$ and $\mathcal{C}^P$ are bounded above by a polynomial of degree $2^{p-2}$, where $p = \abs{\mathcal{A}}$.  We will calculate the upper bound by considering the case where $S_i = \mathbb{N}$ for all $1 \leq i \leq p$.
Notice that when $p=2$, there is exactly 1 block of $n$ 1's and $(n-1)$ blocks of the form $1^m 2^{n-m}$.  So, $\abs{B_n(\mathcal{C}^P)} = n$, and similarly $\abs{B_n(\mathcal{C}^S)} = n$. 

If we assume that $\abs{B_n(\mathcal{C}^P)} \approx n^{2^{p-2}}$ for $p>2$, notice that $\abs{B_n(\mathcal{C}^P)} \approx \left(n^{2^{p-2}}\right)^2 = n^{2^{p-1}}$ for $(p+1)$ symbols.  
Hence, our claim holds.  Therefore, we obtain 
\[ \tilde{h}(\mathcal{C}^P \cup \mathcal{C}^S) = \lim_{n \to \infty} \frac{1}{n}\log(n^{2^{p-2}}) = 0\].  

\item Given our decomposition, for $M\in\mathbb{N}$, define the collections of words $G_\mathcal{S}(M)$ to be \[G_\mathcal{S}(M) = \{ \xi_P \xi_G \xi_S : \xi_P \in \mathcal{C}^P, \xi_G \in G_\mathcal{S}^*, \xi_S \in \mathcal{C}^S, \abs{\xi_P} \leq M, \abs{\xi_S} \leq M\} .\]  Given some $\xi_P\xi_G\xi_S \in G_\mathcal{S}(M)$, notice that $\xi_P$ and $\xi_S$ determine the length of word $u$ and $v$ such that $u \xi_P\xi_G \xi_S v \in G_\mathcal{S}$.  Since there are only finitely many such $\xi_P$ and $\xi_S$, then there exists some $t$ such that there exists words $u$ and $v$ with $\abs{u}\leq t$ and $\abs{v} \leq t$ for which $u\xi_P \xi_G \xi_S v \in G_\mathcal{S}$.
\end{enumerate}
Since our decomposition satisfies these three properties, then by \cite{CT}, an $\mathcal{S}$-limited shift and all of its subfactors must be intrinsically ergodic.
\end{proof}

\section{Conjugacy}
Next, we address the question of conjugacy of $\mathcal{S}$-limited shifts through a generalization of the results in \cite{SPrimegap}. The benefit of restricting the order in which the blocks appear is that we can now split the elements of an $\mathcal{S}$-limited shift up into its building blocks from $G_\mathcal{S}$, since we know that every time we begin a run of $1$s a new element of $G_\mathcal{S}$ has begun, which is the only time that can happen. Therefore, we can consider elements of $\mathcal{S}$-limited shifts in terms of the elements of $G_\mathcal{S}$ they consist of and the order in which those elements appear.

In the following proof, we consider two $\mathcal{S}$-limited shifts, say $X(\mathcal{S})$ and $X(\mathcal{T})$.  We will denote the core sets associated with the shifts $X(\mathcal{S})$ and $X(\mathcal{T})$ by $G_\mathcal{S}$ and $G_\mathcal{T}$, respectively.

\begin{theorem}
Let $\mathcal{S}=\set{S_1,...,S_p}$ and $\mathcal{T}=\set{T_1,...,T_q}$ and suppose $X(\mathcal{S})$ is conjugate to $X(\mathcal{T})$. Then, for all $l\in\mathbb{N}$, $\abs{\set{x\in G_\mathcal{S} : |x|=l}}=\abs{\set{y\in G_\mathcal{T} : |y|=l}}$.
\end{theorem}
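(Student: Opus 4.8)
The plan is to recover the length statistics of $G_\mathcal{S}$ from a conjugacy invariant, namely the number of periodic points. For $n\in\mathbb{N}$ let $P_n(X)$ be the number of points $x$ with $\sigma^n x=x$. A conjugacy commutes with $\sigma$, so it restricts to a bijection between the $\sigma^n$-fixed points of $X(\mathcal{S})$ and those of $X(\mathcal{T})$; hence each $P_n$ is a conjugacy invariant, and it is finite for every $\mathcal{S}$-limited shift (there are only finitely many blocks of each length). Setting $f_\mathcal{S}(x)=\sum_{\omega\in G_\mathcal{S}}x^{|\omega|}=\sum_{l\ge 1}a_l x^l$ with $a_l=|\{x\in G_\mathcal{S}:|x|=l\}|$, and likewise $f_\mathcal{T}$ with coefficients $b_l$, the theorem asserts exactly $a_l=b_l$, i.e.\ $f_\mathcal{S}=f_\mathcal{T}$. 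So it suffices to show that the numbers $P_n$ determine $f_\mathcal{S}$.

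First I would separate the degenerate periodic points. Since every $m_i\in S_i\subseteq\mathbb{N}$ satisfies $m_i\ge 1$, each block of $G_\mathcal{S}$ contains all $p$ letters; assuming $p\ge 2$, a block boundary occurs precisely at each occurrence of the pattern $p1$, so every point of $X(\mathcal{S})$ has a unique decomposition into blocks. A $\sigma^n$-fixed point is either constant or not; a non-constant periodic point must contain a $p1$ transition and is therefore a genuine cyclic concatenation of blocks, whereas the only constant points of $X(\mathcal{S})$ are the sequences $i^\infty$ with $S_i$ unbounded. Letting $Q_n$ denote the number of block-structured $\sigma^n$-fixed points and noting that the constant points account for exactly $P_1$ fixed points, I obtain $P_n=P_1+Q_n$ for all $n$, so $Q_n=P_n-P_1$ is again a conjugacy invariant.

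The core of the argument is the formal power series identity $\displaystyle \frac{1}{1-f_\mathcal{S}(x)}=\exp\!\left(\sum_{n\ge 1}\frac{Q_n}{n}x^n\right)$, which makes sense even when $G_\mathcal{S}$ is infinite because $f_\mathcal{S}(0)=0$ and each $a_l$ is finite. To prove it I would organize the block-structured periodic points by primitive orbits: a point of minimal period $d$ corresponds, through its unique block decomposition, to a primitive cyclic sequence of blocks of total length $d$, and such a necklace yields an orbit of $d$ points, so $Q_n=\sum_{d\mid n}d\,N_d$, where $N_d$ is the number of primitive block-necklaces of length $d$. The cyclotomic (necklace) identity for the alphabet $G_\mathcal{S}$ graded by block length then gives $\frac{1}{1-f_\mathcal{S}}=\prod_{d\ge 1}(1-x^d)^{-N_d}$; taking its logarithm and comparing the coefficient of $x^n$ returns precisely $\sum_n \frac{Q_n}{n}x^n$. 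Hence $f_\mathcal{S}$ is a function of the invariants $\{Q_n\}$ alone.

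Putting the pieces together, conjugacy gives $Q_n^{\mathcal{S}}=Q_n^{\mathcal{T}}$ for every $n$, whence $\frac{1}{1-f_\mathcal{S}}=\frac{1}{1-f_\mathcal{T}}$ and so $f_\mathcal{S}=f_\mathcal{T}$, i.e.\ $a_l=b_l$ for all $l$. I expect the main obstacle to be the combinatorial identity relating $Q_n$ to $f_\mathcal{S}$: one must check that the block decomposition is genuinely unique (this is exactly where $m_i\ge 1$, forcing each $p1$ to mark a boundary, is used) and correctly account for orbit sizes and cyclic symmetries when passing from ordered concatenations to necklaces --- equivalently, justify the zeta-function computation. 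A secondary point is to keep the constant points isolated so that they contribute only the fixed shift $P_1$ rather than corrupting the coefficients; the degenerate case $p=1$, where $X(\mathcal{S})$ is a single point and the conclusion can fail, should be excluded by assuming $p,q\ge 2$.
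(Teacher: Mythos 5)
Your argument is correct, and it rests on the same conjugacy invariant as the paper's proof --- the number of $\sigma^n$-fixed points --- together with the same structural fact that a non-constant periodic point in an $\mathcal{S}$-limited shift decomposes uniquely into blocks of $G_\mathcal{S}$ at the $p1$ transitions. Where you differ is in how the length statistics of $G_\mathcal{S}$ are extracted from these invariants. The paper runs a minimal-counterexample argument: taking the smallest $l$ at which the counts disagree, it matches up the period-$l$ points built from blocks of length $<l$ via a length-preserving bijection and observes that the leftover period-$l$ points are exactly the $\overline{x}$ with $x\in G_\mathcal{S}$, $\abs{x}=l$, so the counts of period-$l$ points would differ. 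You instead prove the global identity $\bigl(1-f_\mathcal{S}(x)\bigr)^{-1}=\exp\bigl(\sum_{n\ge 1}\tfrac{Q_n}{n}x^n\bigr)$ via primitive block-necklaces, which recovers the entire generating function $f_\mathcal{S}$ from the zeta function in one stroke. Your route requires the heavier cyclotomic/necklace machinery (which you correctly identify as the point needing careful justification), but it buys a stronger and cleaner statement --- the generating function of $G_\mathcal{S}$ is determined by the zeta function of the shift --- and it handles the constant points $i^\infty$ explicitly via the correction $Q_n=P_n-P_1$, a detail the paper's proof passes over silently even though those points are $\sigma^l$-fixed for every $l$. Both proofs implicitly need $p,q\ge 2$ (for $p=1$ the shift is a single point and the statement fails); you are right to flag this, and it is a gap in the theorem's hypotheses rather than in your argument.
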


\begin{proof}
Let $X(\mathcal{S})$ be conjugate to $X(\mathcal{T})$. Suppose, to the contrary, that there exists some $l\in\mathbb{N}$ such that $\abs{\set{x\in G_\mathcal{S} : |x|=l}}\neq \abs{\set{y\in G_\mathcal{T} : |y|=l}}$. 
In particular, let $l$ be the smallest such length and suppose without loss of generality that $\abs{\set{x\in G_\mathcal{S} : |x|=l}} > \abs{\set{y\in G_\mathcal{T} : |y|=l}}$. 
Now, consider the number of points of period $l$ in $X(\mathcal{S})$ and $X(\mathcal{T})$. There must be the same number of periodic points that are built using elements of $G_\mathcal{S}$ and $G_\mathcal{T}$ of length less than $l$ because we can construct a length-preserving bijection between the elements of $G_\mathcal{S}$ and $G_\mathcal{T}$ that are shorter than $l$ letters. 
However, there will be $\abs{\set{x\in G_\mathcal{S} : |x|=l}}$ elements in $X(\mathcal{S})$ of the form $\bar{x}$ where $x\in G_\mathcal{S}$ and $|x|=l$ and $\abs{\set{y\in G_\mathcal{T} : |y|=l}}$ elements in $X(\mathcal{T})$ of the form $\bar{y}$ where $y\in G_\mathcal{T}$ and $|y|=l$, and those consist of the remaining words of period $l$ in the two subshifts respectively. 
Therefore, there are more words of period $l$ in $X(\mathcal{S})$ than there are in $X(\mathcal{T})$. However, conjugacy preserves the number of periodic points in a subshift, so we have a contradiction. Thus, for all $l\in\mathbb{N}$, we must have $\abs{\set{x\in G_\mathcal{S} : |x|=l}}=\abs{\set{y\in G_\mathcal{T} : |y|=l}}$.
\end{proof}

As shown in \cite{SPrimegap} for an $(S,S')$-gap shift (that is, the case where $p=q=2$), even though this is necessary, it is far from sufficient. Next, we can provide a sufficient case for sufficiency which is a generalization of their result for $(S,S')$-gap shifts.

\begin{theorem} \label{limited conjugacy}
	Let $\mathcal{S}=\set{S_1,...,S_p}$ and $\mathcal{T}=\set{T_1,...,T_p}$ where $S_i,T_i\subseteq\mathbb{N}$ for $1\leq i \leq p$. Let $s_i^m$ denote the $m$-th element of $S_i$ sorted in increasing order, and define $t_i^m$ similarly. If for all $i_1,i_2,...,i_p\in\mathbb{N}$, \[
		\sum_{k=1}^p s_k^{i_k}=\sum_{k=1}^p t_k^{i_k},
	\]
	then $X(\mathcal{S})$ is conjugate to $X(\mathcal{T})$.
\end{theorem}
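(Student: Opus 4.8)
The plan is to first extract from the hypothesis a rigid structural relationship between $\mathcal{S}$ and $\mathcal{T}$, and then build an explicit conjugacy that shifts block boundaries by fixed amounts. First I would unpack the summation condition. Writing $\delta_k(m) = s_k^m - t_k^m$, the hypothesis says $\sum_{k=1}^p \delta_k(i_k) = 0$ for every tuple $(i_1,\ldots,i_p)$. Freezing all coordinates but one and varying it shows that each $\delta_k$ does not depend on its argument, so there are constants $c_1,\ldots,c_p$ with $s_k^m = t_k^m + c_k$ for all $m$, and evaluating the sum at any single tuple gives $\sum_{k=1}^p c_k = 0$. In other words $S_k$ is the elementwise shift $T_k + c_k$, the $c_k$ sum to zero, and in particular $|S_k| = |T_k|$ for each $k$. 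This immediately yields a length-preserving bijection $\phi\colon G_\mathcal{S}\to G_\mathcal{T}$ given by $\phi(1^{s_1^{i_1}}\cdots p^{s_p^{i_p}}) = 1^{t_1^{i_1}}\cdots p^{t_p^{i_p}}$; it is a bijection because it acts as the identity on index tuples, and it preserves length precisely by the hypothesis.

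Next I would define the candidate conjugacy $\varphi\colon X(\mathcal{S})\to X(\mathcal{T})$ as the map that decomposes a block-decomposable point into its $G_\mathcal{S}$-blocks (a decomposition that is canonical, since a new block begins exactly when a run of $1$s begins) and replaces each block $\tau$ by $\phi(\tau)$ in place. Since $\phi$ preserves lengths, every block occupies exactly the same index range before and after, so $\varphi$ commutes with the shift on the dense set of block-decomposable points and sends $G_\mathcal{S}$-concatenations to $G_\mathcal{T}$-concatenations, hence into $X(\mathcal{T})$. To understand the effect on individual coordinates, set $C_k = \sum_{j=1}^k c_j$ for $0\le k\le p$: within a block the boundary between the run of $k$ and the run of $k+1$ moves by $-C_k$, while $C_0 = C_p = 0$ means the boundaries between consecutive $G_\mathcal{S}$-blocks are fixed. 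Thus a coordinate's image letter can differ from its source letter only when the coordinate lies within distance $C := \max_{0\le k\le p}|C_k|$ of some letter transition.

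The main obstacle, and the heart of the argument, is to show that $\varphi$ is in fact a sliding block code with bounded window, even though individual blocks may be arbitrarily long. The key point is that the boundary shifts $C_k$ are fixed constants, not block-dependent quantities, and that the type of each transition can be read off locally: a transition $k\to k+1$ is an internal boundary carrying the shift $C_k$, while a transition $p\to 1$ is a block boundary carrying shift $0$. Hence to compute $y_n = \varphi(x)_n$ I would inspect the window $x_{[n-C-1,\,n+C+1]}$, locate every transition within distance $C$ of $n$, apply the corresponding fixed shift to reconstruct the nearby image boundaries, and output the letter of the image run containing $n$; if the window contains no transition, then $y_n = x_n$. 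I would then verify that this local rule is well defined and returns the correct letter in every case, including when several short runs crowd the window and when $n$ sits deep inside a long run. This shows $\varphi$ is a genuine sliding block code, hence continuous and shift-commuting.

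Finally I would run the identical construction in reverse, using the shifts $-c_k$ (which again sum to zero) to obtain a sliding block code $\psi\colon X(\mathcal{T})\to X(\mathcal{S})$, and check that $\psi\circ\varphi$ and $\varphi\circ\psi$ are the identity on the dense sets of block-decomposable points, whence on all of $X(\mathcal{S})$ and $X(\mathcal{T})$ by continuity; since $X(\mathcal{T})$ is closed, the closure points (bi-infinite runs of a single letter, present exactly when the corresponding $S_k$, equivalently $T_k$, is infinite) are handled automatically. This exhibits $\varphi$ as a shift-commuting homeomorphism, i.e. a conjugacy. Alternatively, once $\varphi$ is known to be a continuous shift-commuting bijection, compactness makes its inverse automatically continuous, so invoking the Curtis--Hedlund--Lyndon theorem also finishes the proof; I expect the bounded-window verification of the previous paragraph to remain the only genuinely delicate step.
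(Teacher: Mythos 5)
Your proposal is correct, and its core construction coincides with the paper's: you extract the same constant offsets ($s_k^m=t_k^m+c_k$ with $\sum_k c_k=0$, the paper's $d_j=-c_j$), build the same length-preserving bijection on the core sets $G_\mathcal{S}\to G_\mathcal{T}$, and realize it by the same kind of sliding block code whose window radius is governed by the partial sums $C_k=\sum_{j\le k}c_j$ (the paper's $r_k$), with the letter at a coordinate determined by the nearest shifted transition point. Where you genuinely diverge is in how you certify that this code is a conjugacy. The paper proves two auxiliary lemmas (Lemmas \ref{induced infinite block} and \ref{induced bijection}) showing that \emph{any} sliding block code inducing a bijection of core sets is automatically a conjugacy; this requires a fairly long case analysis of injectivity, surjectivity, and in particular of the closure points of $X(\mathcal{S})$ and $X(\mathcal{T})$ that begin or end with an infinite run of a single letter. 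You instead exploit the symmetry of the hypothesis: the reversed offsets $-c_k$ produce an inverse sliding block code, and agreement of the two composites with the identity on the dense set of block-decomposable points extends by continuity to the whole (closed) shift spaces, which disposes of the infinite-run points without any case analysis. Your route is shorter and arguably cleaner for this theorem, at the cost of being less reusable: the paper's Lemma \ref{induced bijection} is a general-purpose criterion applicable to any induced bijection of core sets, not only to those arising from constant offsets. The one place where you should be careful to supply the details you defer is the well-definedness of the local rule when several transitions fall in one window and when the image boundary governing coordinate $n$ comes from a source boundary outside the window to the left; this works out because image runs still have positive length and run letters are unchanged, but it deserves the explicit check you promise (the paper's tie-breaking rule plays the same role and is treated at a comparable level of detail).
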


In order to prove this theorem, however, we first need the following result to allow us to---as done in the case for sufficiency---think in terms of elements of $G_\mathcal{S}$.

\begin{definition}
Suppose $\mathcal{S}=\set{S_1,...,S_p}$ and $\mathcal{T}=\set{T_1,...,T_q}$. Then, we say the sliding block code $\varphi:X(\mathcal{S})\to X(\mathcal{T})$ induced by the block map $\Phi$ of memory $m$ and anticipation $n$ \textbf{induces} $\psi:G_\mathcal{S}\to G_\mathcal{T}$ if for all $x$ and $y$ of length $m$ and $n$ respectively and for all $z\in G_\mathcal{S}$, $\varphi(xzy)=\psi(z)$.
\end{definition}

\begin{example}[Induced Conjugacy]\label{induced conjugacy example}

Let $\mathcal{S}=\set{S_1,S_2,S_3}$ where $S_1=\mathbb{N}$, $S_2=\set{2n : n\in \mathbb{N}}$, and $S_3=\set{3,5}$. Also, let $\mathcal{T}=\set{T_1,T_2,T_3}$ where $T_1=\mathbb{N}$, $T_2=\set{2n+1 : n\in\mathbb{N}}$, and $T_3=\set{2,4}$. If we say $s_i^m$ and $t_i^m$ are the $m$th smallest element of $s_i$ and $t_i$ respectively, we notice that $s_1^m=t_1^m$, $s_2^m+1=t_2^m$, and $s_3^m-1=t_3^m$. Then, we can see that $\psi:G_\mathcal{S}\to G_\mathcal{T}$ where $\psi(1^n2^m3^p)=1^n2^{m+1}3^{p-1}$ is a conjugacy. If we take the block code $\Phi:\set{1,2,3}^2\to\set{1,2,3}$ with memory 1 and anticipation 0 where \[
\Phi(xy)=\begin{cases}
x \text{ if }xy=23 \\
y \text{ otherwise}
\end{cases}
\]
we can see that the sliding block code induced by $\Phi$ will induce $\psi$.
\end{example}

Not all elements of $X(\mathcal{S})$, however, will be the concatenation of elements of $G_\mathcal{S}$. Specifically, that will be the case when some element of $X(\mathcal{S})$ begins or ends with an infinite block of the same letter. Therefore, before we can speak about the action of sliding block codes that induce bijections from some $G_\mathcal{S}$ to some other $G_\mathcal{T}$, we must consider what happens when an element it would act on begins or ends with infinitely many of the same letter.

\begin{lemma} \label{induced infinite block}
Let $\mathcal{S}=\set{S_1,...,S_p}$ and $\mathcal{T}=\set{T_1,...,T_q}$ and $\varphi:X(\mathcal{S})\to \set{1,2,...,q}^\mathbb{Z}$ be a sliding block code induced by the block map $\Phi$ of memory $m$ and anticipation $n$ that induces a bijection $\psi:G_\mathcal{S}\to G_\mathcal{T}$. Also, let $I=\set{i\in\mathbb{N} : \abs{S_i}=\abs{\mathbb{N}}}$ and $J=\set{j\in\mathbb{N} : \abs{T_j}=\abs{\mathbb{N}}}$. Then, $\pi:I\to \set{1,2,...,q}$ where for all $i\in I$, $\Phi(i^r)=\pi(i)$ for $r=m+n+1$ must have image $J$.
\end{lemma}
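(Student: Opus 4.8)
The plan is to prove the two inclusions $\pi(I)\subseteq J$ and $J\subseteq\pi(I)$ separately, in both cases exploiting the action of $\varphi$ on the boundary points of $X(\mathcal{S})$ and $X(\mathcal{T})$ produced by the closure, namely the points carrying an infinite run of a single letter. The central observation is that a point of $X(\mathcal{S})$ has a right tail $\cdots iii$ of a single letter $i$ precisely when $i\in I$: such a tail can only come from a single block $i^{s_i}$ with $s_i\to\infty$, since consecutive blocks of a word in $G_\mathcal{S}$ use distinct letters, and arbitrarily long such blocks require $S_i$ to be infinite. The same equivalence holds for $X(\mathcal{T})$ and $J$. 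Moreover, once the window $x_{[k-m,k+n]}$ of width $r=m+n+1$ lies entirely inside such a tail it reads $i^r$, so $\varphi(x)_k=\Phi(i^r)=\pi(i)$; hence $\varphi$ carries a point with right tail $i^\infty$ to a point with right tail $\pi(i)^\infty$.

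For $\pi(I)\subseteq J$, I would fix $i\in I$ and choose any $x\in X(\mathcal{S})$ with right tail $i^\infty$ (possible since $S_i$ is infinite). First I would verify that $\varphi$ maps $X(\mathcal{S})$ into $X(\mathcal{T})$: on the dense set of genuine concatenations of $G_\mathcal{S}$-blocks, $\varphi$ reproduces the corresponding concatenation of $\psi$-images, which lies in $X(\mathcal{T})$, and since $X(\mathcal{T})$ is closed and $\varphi$ continuous, $\varphi(X(\mathcal{S}))\subseteq X(\mathcal{T})$. By the observation above, $\varphi(x)\in X(\mathcal{T})$ has right tail $\pi(i)^\infty$; but a point of $X(\mathcal{T})$ carries an infinite run of the letter $\pi(i)$ only when $T_{\pi(i)}$ is infinite, i.e.\ when $\pi(i)\in J$. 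This yields $\pi(i)\in J$.

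For $J\subseteq\pi(I)$, I would first upgrade the previous remark to surjectivity: $\varphi(X(\mathcal{S}))$ is compact, hence closed, and contains every genuine $G_\mathcal{T}$-concatenation (take the $\psi^{-1}$-image block by block, using that $\psi$ is a bijection), so it contains their closure $X(\mathcal{T})$; thus $\varphi$ maps $X(\mathcal{S})$ onto $X(\mathcal{T})$. Now fix $j\in J$, pick $y\in X(\mathcal{T})$ with right tail $j^\infty$, and choose a preimage $x$ with $\varphi(x)=y$. The point $x$ must itself have a single-letter infinite right tail $i^\infty$: otherwise its right half would be an infinite concatenation of complete $G_\mathcal{S}$-blocks, so $\varphi(x)$ would be an infinite concatenation of complete $G_\mathcal{T}$-blocks and could not have a single-letter tail, contradicting $\varphi(x)=y$. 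Hence $i\in I$ and, by the central observation, $y=\varphi(x)$ has right tail $\pi(i)^\infty$; comparison with $j^\infty$ forces $\pi(i)=j$, so $j\in\pi(I)$.

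I expect the main obstacle to be the careful bookkeeping around the tail points rather than any deep idea: pinning down exactly which points of $X(\mathcal{S})$ and $X(\mathcal{T})$ carry an infinite single-letter run and matching them to the index sets $I$ and $J$, and justifying that $\varphi$ is both into and onto $X(\mathcal{T})$ (the latter via the density of genuine concatenations together with compactness and the bijectivity of $\psi$). Verifying that a point with only finitely many letter-transitions on the right is exactly one ending in some $i^\infty$, and that its image inherits the same structure, is the step most likely to require a careful argument.
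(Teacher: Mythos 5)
Your proposal is correct, but it takes a genuinely different route from the paper's. The paper argues entirely at the level of finite words from the core sets: for $J\subseteq\pi(I)$ it supposes $\Phi(i^r)\neq j$ for every $i\in I$ and bounds the length of any run of $j$'s in $\psi(z)$ for $z\in G_\mathcal{S}$ (a long output run of $j$'s forces, by pigeonhole, a long input run of some single letter, which must be a letter of $I$, on which the window reads $i^r$ and outputs $\pi(i)\neq j$), contradicting the arbitrarily long $j$-runs present in $G_\mathcal{T}$ when $T_j$ is infinite; for $\pi(I)\subseteq J$ it picks $w\in G_\mathcal{S}$ containing $i^l$ with $l$ exceeding $r$ plus the largest element of any finite set in $\mathcal{T}$, so that $\psi(w)\in G_\mathcal{T}$ contains a run of $\pi(i)$'s too long to fit any finite $T_j$. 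You instead work with infinite points: you characterize the letters admitting an infinite single-letter tail as exactly those indexed by $I$ (resp.\ $J$), observe that $\varphi$ sends a tail $i^\infty$ to a tail $\pi(i)^\infty$, and deduce the two inclusions from $\varphi(X(\mathcal{S}))\subseteq X(\mathcal{T})$ (continuity plus closedness of $X(\mathcal{T})$) and $\varphi(X(\mathcal{S}))\supseteq X(\mathcal{T})$ (compactness of the image plus density of genuine concatenations and bijectivity of $\psi$). Your argument is sound, and in particular not circular: the into/onto claims use only the definition of ``induces'' and the bijectivity of $\psi$, not the lemma being proved. It also front-loads most of the content of Lemma \ref{induced bijection}, which in the paper handles the tail cases of into/onto by citing the present lemma, so adopting your route would let that later proof be shortened considerably. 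What the paper's finite-word argument buys is that it never has to touch the closure points of the shift at all; the step of yours that needs the most care is the dichotomy for a preimage $x$ of a point ending in $j^\infty$ --- either $x$ ends in a single-letter tail, or its right half eventually decomposes into complete $G_\mathcal{S}$-blocks (since every maximal finite run of a letter $i$ in a point of $X(\mathcal{S})$ has length in $S_i$), in which case $\varphi(x)$ eventually decomposes into complete $G_\mathcal{T}$-blocks and cannot end in $j^\infty$. Spelled out this way, your sketch closes that gap and the proof goes through.
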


\begin{proof}
Let $\mathcal{S}=\set{S_1,...,S_p}$ and $\mathcal{T}=\set{T_1,...,T_q}$ and $\varphi:X(\mathcal{S})\to \set{1,2,...,q}^\mathbb{Z}$ be a sliding block code induced by the block map $\Phi$ of memory $m$ and anticipation $n$ that induces a bijection $\psi:G_\mathcal{S}\to G_\mathcal{T}$, and set $r=m+n+1$. Also, let $I=\set{i\in\mathbb{N} : \abs{S_i}=\abs{\mathbb{N}}}$ and $J=\set{j\in\mathbb{N} : \abs{T_j}=\abs{\mathbb{N}}}$. From this, we can define a function $\pi:\set{1,2,...,p}\to\set{1,2,...,q}$ by $\pi(i)=\Phi(i^r)$. First we wish to show for any $j\in J$ there must exist some $i\in I$ so that $\pi(i)=j$, after which we will show $\pi(I)\subseteq J$.

Assume, to the contrary, that there exists $j \in J$ such that there does not exist any $i\in I$ so that $\pi(i)=j$. Now, let $x$ and $y$ be strings over $\set{1,2,...,p}$ of length $m$ and $n$ respectively, and consider the longest block of $j$s that can occur in the image of $\varphi(xzy)$ where $z\in G_\mathcal{S}$ with any block code $\Phi_j$ with memory $m$ and anticipation $n$ where $\Phi_j(i^r)\neq j$ for all $i\in I$. This will occur when $\varphi_j(xzy)=j^{|z|}$, so let us consider the largest $z$ where this can be the case. For each letter in $\set{1,2,...,p}\setminus I$, there is a maximum number of consecutive instances of that letter that can occur in any $z\in G_\mathcal{S}$. Clearly, for the $z$ we are constructing, we wish to have as many of each of these letters as possible. Let us say that $s$ is the largest of all of those values, then the length of $z$ contributed by these letters is at most $p*s$. For the letters in $I$, of which there are at most $p$, we can have at most $r-1$ consecutive instances of that letter. Therefore, the contribution to the length of $z$ from letters in $I$ is less than $p*r$. Thus, we cannot have more than $p(r+s)$ consecutive $j$s in the image of any word in $G_\mathcal{S}$ under $\varphi_j$. Then, since $\Phi_j$ will output a $j$ whenever $\Phi$ does, there cannot be more than $p(r+s)$ consecutive $j$s in the image of any word in $G_\mathcal{S}$ under $\varphi$. However, $T_j$ is infinite and $p(r+s)$ is finite, so there exists a word in $G_\mathcal{T}$ with more than $p(r+s)$ consecutive $j$s. Thus, $\varphi$ cannot induce a bijection from $G_\mathcal{S}$ to $G_\mathcal{T}$, giving us a contradiction. Thus, $\pi(I)\supseteq J$.

Now, suppose that $i\in I$. We wish to show $\pi(i)\in J$. To see this, let $k$ be the maximum value in any set in $\mathcal{T}$ that is not infinite, and fix some $i\in I$. Because $i\in I$, we know that $S_i$ is infinite, so there must be some value $l\in S_i$ where $l > k + r$. Therefore, there must exist $w\in G_\mathcal{S}$ that has $l$ consecutive $i$s. Then, $\varphi(w)$ has a block of $l-r  > k$ consecutive $\pi(i)$s. Therefore, $\pi(i)\in J$ because otherwise $\varphi(w)$ could not be in $G_\mathcal{T}$, which would contradict the fact that $\varphi$ induces a bijection from $G_\mathcal{S}$ to $G_\mathcal{T}$. Thus $\pi(I)\subseteq J$. Combining that with the result above that $\pi(I)\supseteq J$, we have $\pi(I)=J$.

\end{proof}

Using Lemma \ref{induced infinite block}, we can establish a link between sliding block codes that induce bijections and conjugacies.

\begin{lemma} \label{induced bijection}
Let $\mathcal{S}=\set{S_1,...,S_p}$ and $\mathcal{T}=\set{T_1,...,T_q}$ and $\varphi:X(\mathcal{S})\to \set{1,2,...,q}^\mathbb{Z}$ be a sliding block code that induces a bijection $\psi:G_\mathcal{S}\to G_\mathcal{T}$. Then, $\varphi$ is a conjugacy from $X(\mathcal{S})$ to $X(\mathcal{T})$.
\end{lemma}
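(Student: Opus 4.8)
The plan is to verify that $\varphi$ is a continuous, shift-commuting bijection from $X(\mathcal{S})$ onto $X(\mathcal{T})$; since both are compact metrizable spaces, such a map is automatically a homeomorphism, and hence a conjugacy. Being a sliding block code, $\varphi$ is already continuous and satisfies $\varphi\circ\sigma=\sigma\circ\varphi$, so the real content is that $\varphi(X(\mathcal{S}))\subseteq X(\mathcal{T})$ and that $\varphi$ is a bijection. The computational fact I would record first is that $\varphi$ acts block-by-block on core concatenations: if $x=\cdots z_{-1}z_0z_1\cdots$ with each $z_i\in G_\mathcal{S}$, then $\varphi(x)=\cdots\psi(z_{-1})\psi(z_0)\psi(z_1)\cdots$. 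This follows from the inducing hypothesis, because the $m$ symbols of memory and $n$ symbols of anticipation needed to evaluate $\Phi$ on the positions occupied by $z_i$ are supplied by the neighbouring blocks, and because $\psi$ is length preserving (immediate from $\varphi(xzy)=\psi(z)$), so the images $\psi(z_i)$ tile exactly the positions occupied by the $z_i$.

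For the image inclusion, the block-by-block formula shows that $\varphi$ sends every core concatenation into a concatenation of elements of $G_\mathcal{T}$, hence into $X(\mathcal{T})$; since the core concatenations are dense in $X(\mathcal{S})$, $\varphi$ is continuous, and $X(\mathcal{T})$ is closed, it follows that $\varphi(X(\mathcal{S}))\subseteq X(\mathcal{T})$. For surjectivity, every core concatenation of $G_\mathcal{T}$ is the $\varphi$-image of the core concatenation of $G_\mathcal{S}$ obtained by replacing each block by its $\psi^{-1}$-preimage; these form a dense subset of $X(\mathcal{T})$, and $\varphi(X(\mathcal{S}))$ is compact and therefore closed, so $\varphi$ is onto. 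The points of $X(\mathcal{T})$ that begin or end with an infinite run are reached because, by Lemma~\ref{induced infinite block}, every $j\in J$ equals $\pi(i)$ for some $i\in I$, so an infinite run of $i$ maps onto an infinite run of $j$.

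Injectivity is where I expect the difficulty to concentrate. On a bi-infinite core concatenation the preimage is recovered canonically: a new block of $G_\mathcal{T}$ begins exactly at the start of each run of $1$'s, so $\varphi(x)$ parses uniquely into elements of $G_\mathcal{T}$, and since $\psi$ preserves length this parse is aligned with the block decomposition of $x$; applying $\psi^{-1}$ blockwise returns $x$, and no two such points collide. The delicate case is a point that begins or ends with an infinite run of a letter $i\in I$: its image carries an infinite run of $\pi(i)$, and recovering $i$ uniquely from the image requires $\pi|_I\colon I\to J$ to be injective. Surjectivity of $\pi|_I$ is exactly Lemma~\ref{induced infinite block}; for injectivity I would argue from the bijectivity of $\psi$, using that each letter occurs in a single run inside any element of $G_\mathcal{T}$. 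If two distinct $i_1,i_2\in I$ satisfied $\pi(i_1)=\pi(i_2)$, then core words carrying long runs of both $i_1$ and $i_2$ would be forced to map to words in which these runs merge into one run of the common image letter, so that distinct core words acquire equal images, contradicting the injectivity of $\psi$. Pinning down this infinite-run behaviour and ruling out such collapses is the main obstacle of the proof, and is precisely the step that should be handled with the greatest care.

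Once $\pi|_I$ is known to be a bijection, the two cases combine to give injectivity of $\varphi$ on all of $X(\mathcal{S})$: a point and its image share the same infinite-run signature, so a concatenation point and an infinite-run point can never be confused, and within each type the recovery procedure above is unambiguous. With $\varphi$ established as a continuous, shift-commuting bijection between the compact metrizable spaces $X(\mathcal{S})$ and $X(\mathcal{T})$, it is a homeomorphism intertwining the shift maps, that is, a conjugacy, which completes the proof.
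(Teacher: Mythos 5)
Your proposal is correct in outline and reaches the same three milestones as the paper's proof (image lands in $X(\mathcal{T})$, surjectivity, injectivity), but it gets to the first two by a genuinely different and cleaner route. Where the paper verifies $\varphi(X(\mathcal{S}))\subseteq X(\mathcal{T})$ and ontoness by explicitly constructing images and preimages of points that begin or end with an infinite run of one letter, you dispose of both steps topologically: the blockwise formula $\varphi(\cdots z_{-1}z_0z_1\cdots)=\cdots\psi(z_{-1})\psi(z_0)\psi(z_1)\cdots$ handles the dense set of bi-infinite core concatenations, and then continuity of the sliding block code, compactness of $X(\mathcal{S})$, and closedness of $X(\mathcal{T})$ give the inclusion and surjectivity for free. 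This buys a real simplification --- the paper's surjectivity paragraph for boundary points, with its choice of $w_{l_0}$ and $l_0=q(r+m)$, disappears entirely --- at the cost of being nonconstructive. Your unique-parsing argument for injectivity on core concatenations (a block of $G_\mathcal{T}$ begins exactly where a $1$ follows a $q$, and length preservation aligns the parses) also subsumes the paper's separate ``different sequence of words'' and ``$x=\sigma^n(y)$'' cases more transparently than the paper's periodicity argument.

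The one place where your plan is thinner than it should be is the case you yourself flag: injectivity at points carrying an infinite run. Two issues. First, your argument that $\pi|_I$ is injective is not yet a proof: from $\pi(i_1)=\pi(i_2)=j$ you correctly deduce that the two long runs must merge into the single $j$-run of an element of $G_\mathcal{T}$, so that the image depends on $s_{i_1}$ and $s_{i_2}$ only through their sum; but to contradict injectivity of $\psi$ you must then exhibit two \emph{distinct} pairs $(s_{i_1},s_{i_2})$ with equal sum inside $S_{i_1}\times S_{i_2}$, which is not automatic for arbitrary infinite sets (the pair could be Sidon-like). Second, even granting that $\pi|_I$ is a bijection onto $J$, the ``same infinite-run signature'' does not finish the case: two points can end in an infinite run of the \emph{same} letter yet differ in the finite partial block preceding that run (e.g.\ $\cdots w\,1^{a}2^{\infty}$ versus $\cdots w\,1^{b}2^{\infty}$ with $a\neq b$), and one must still show their images differ before being absorbed into the common run of $\pi(2)$'s. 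This is exactly the sub-case the paper attacks with its auxiliary words $x''$ and $y''$ extending the partial blocks to full elements of $G_\mathcal{S}$; to be fair, the paper's own justification there (``$\psi(x'')$ and $\psi(y'')$ must disagree on the section that is the image of the part up to and including the block'') is itself asserted rather than proved, so you have located the genuine weak point of the lemma rather than introduced a new one --- but your write-up would need to supply that comparison of truncated images explicitly.
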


\begin{proof}
Let $\mathcal{S}=\set{S_1,...,S_p}$ and $\mathcal{T}=\set{T_1,...,T_q}$ and suppose $\varphi:X(\mathcal{S})\to \set{1,2,...,q}^\mathbb{Z}$ is a sliding block code that induces a bijection $\psi:G_\mathcal{S}\to G_\mathcal{T}$. 
 First, we will show that the image of $\varphi$ is $X(\mathcal{T})$. 
Let $x\in X(\mathcal{S})$, and suppose that it neither begins nor ends with an infinite block of the same letter. Then, using the definition of an $\mathcal{S}$-limited shift, we can rewrite $x=\cdots x_{-1}x_0x_1\cdots$ where each $x_i\in G_\mathcal{S}$. Because $\varphi$ induces $\psi$, we can see $\varphi(x)=\cdots\psi(x_{-1})\psi(x_0)\psi(x_1)\cdots$. Then, since each $\psi(x_i)\in G_{\mathcal{T}}$, we have $\varphi(x)\in X(\mathcal{T})$.

Now, we must handle the case where $x$ begins and/or ends with an infinite block of the same letter. Suppose $x\in X(\mathcal{S})$ ends with an infinite block of the letter $j$. Then, through possibly reindexing the $x_i$s, we can say $x=\cdots x_{-2}x_{-1}x_0 y j^\mathbb{N}$ where each $x_i\in G_\mathcal{S}$, $y=1^{n_1}2^{n_2}\cdots (j-1)^{n_{j-1}}$ where $n_i\in S_i$ for all $i\in\set{1,2,...,j-1}$. Then, because $x$ ends with infinitely many $j$s, $S_j$ must be infinite, so there must be some $z\in G_\mathcal{S}$ so that $z$ begins $yj^r$ where $r$ is 1 plus the sum of the memory and anticipation of $\Phi$. Then, by Lemma \ref{induced infinite block}, if we say $\Phi(j^r)=k$, we know that we can have a word in $X(\mathcal{T})$ end in a block of infinitely many $k$s. Therefore, we have that $\varphi(x)=\cdots\psi(x_{-2})\psi(x_{-1})\psi(x_0)wk^\mathbb{N}$ where $w$ is the first $|y|$ letters of $\psi(z)$, from which we can see $\varphi(x)\in X(\mathcal{T})$. All of the remaining cases can be shown using the same approach of breaking as much of $x$ as possible up into elements of $G_\mathcal{S}$, and then viewing the remaining part as a substring of an element of $G_\mathcal{S}$ that begins and/or ends with a sufficiently long string of the letter that begins and/or ends $x$.  Then, we can use the fact that our sliding block code induces a bijection to see what it will map that word to, and we find that it is in $X(\mathcal{T})$.
Therefore, $\varphi(x) \in X(\mathcal{T})$ for all $x \in X(\mathcal{S})$.

Now, we will show that $\varphi:X(\mathcal{S})\to X(\mathcal{T})$ is a bijection. For that, first we will show that $\varphi$ is onto. Suppose $y\in X(\mathcal{T})$. Again, we first consider the case where $y=\cdots y_{-1}y_0y_1\cdots$ where each $y_i\in G_{\mathcal{T}}$. Then, since $\psi$ is a bijection, we can construct $x\in X(\mathcal{S})$ where $x=\cdots x_{-1}x_0x_1\cdots$ where for every $i\in\mathbb{Z}$, $x_i=\psi^{-1}(y_i)$. Then, using the fact that $\varphi$ induces $\psi$, we can see $\varphi(x)=y$.

Suppose now that $y\in X(\mathcal{T})$ begins and/or ends with an infinite block of the same letter---and as above, without loss of generality, say $y=\cdots y_{-2}y_{-1}y_0zj^\infty$ where each $y_i\in G_\mathcal{T}$ and $z=1^{n_1}2^{n_2}\cdots (j-1)^{n_{j-1}}$ where $n_i\in T_i$ for all $i\in\set{1,2,...,j-1}$. Now, for each $l\in \mathbb{N}$, let $w_l\in G_\mathcal{T}$ so that $w_l$ begins $yj^{l+r}$---note that this exists for every $l$ because this can be followed by more $j$s and $T_j$ is infinite. We wish to find a sufficiently large $l$ so the first $|yj^l|$ letters of $\psi^{-1}(w_l)$ ends with $i^r$ where $\Phi(i^r)=j$ with $S_i$ infinite. If we let $l_0=q(r+m)$ where $m$ is the largest element of any finite set in $\mathcal{T}$, then by similar reasoning to that employed in the proof of Lemma \ref{induced infinite block}, this must be the case for $w_{l_0}$. Thus, if $x=\cdots x_{-2}x_{-1}x_0 w i^\infty$ where $x_i=\psi^{-1}(y_i)$ for all $i\leq 0$, and $w$ is the first $|yj^l|$ letters of $\psi^{-1}(w_{l_0})$, then $\varphi(x)=y$. Therefore, $\varphi$ is onto.

Next, suppose $x,y\in X(\mathcal{S})$ with $x\neq y$. There are three ways in which this could happen: either $x$ and $y$ consist of a different sequence of words from $G_\mathcal{S}$, $x=\sigma^n(y)$ for some $n\neq 0$, or $x$ and $y$ consist of the concatenation of the same complete elements of $G_\mathcal{S}$ but are not equal (and therefore must begin and/or end with infinite blocks of some letter). In the second case, $\varphi(x)=\varphi(\sigma^n(y))=\sigma^n(\varphi(y))$. Thus, for us to have $\varphi(x)=\varphi(y)$, then $\varphi(x)$ must have a period that divides $n$. However, this means that $x$ and $\varphi(x)$ would have different periods because $x\neq y$ so $x$ cannot have a period that divides $n$, which gives us a contradiction. Therefore, if $x=\sigma^n(y)$, then $\varphi(x)\neq \varphi(y)$. If $x$ and $y$ consist of a different sequence of words from $G_\mathcal{S}$, then, because $\varphi$ induces the bijection $\varphi$, we must have $\varphi(x)$ and $\varphi(y)$ consist of a different sequence of words from $G_{\mathcal{T}}$. Thus, we must have $\varphi(x)\neq \varphi(y)$. Finally, suppose that $x$ and $y$ consist of the concatenation in the same order of the same complete elements of $G_\mathcal{S}$ but still $x\neq y$. Suppose, without loss of generality, that their difference comes after the end of the rightmost complete element of $G_\mathcal{S}$, and say that the parts of the words after the last complete element of $G_\mathcal{S}$ until there is an infinite block of some letter are $x'$ and $y'$ respectively, and that the letters repeated infinitely many times are $i$ and $j$ respectively. Notice that we must have $x'\neq y'$. Therefore, there must exist words $x''$ and $y''$ in $G_\mathcal{S}$ so that $x''$ begins with $x'i^r$ and $y''$ begins with $y'j^r$, and clearly $x_1\neq y_1$. We can have it so that $x''$ and $y''$ have the same number of every letter following the smaller of $i$ and $j$. Then, since $\psi$ is induced by a sliding block code, we can see that $\psi(x'')$ and $\psi(y'')$ must disagree on the section that is the image of the part up to and including the block of the smaller of the $i$s and $j$s. We can also see that the section of $\psi(x'')$ that is the image of the part of $x''$ up to the block of $i$s and the section of $\psi(y'')$ that is the image of the part of $y''$ up to the block of $j$s must be in $\varphi(x)$ and $\varphi(y)$ respectively, so we must have $\varphi(x)\neq \varphi(y)$. Therefore, we have $\varphi(x)=\varphi(y)$ if and only if $x=y$, so $\varphi$ is one-to-one. Thus, $\varphi$ is an invertible factor map, so it is a conjugacy.
\end{proof}

Before proving Theorem \ref{limited conjugacy}, we will introduce one last definition to keep our notation as simple as possible.

\begin{definition}
Let $\mathcal{S}=\set{S_1,...,S_p}$ and $\mathcal{T}=\set{T_1,...,T_q}$ and suppose $\varphi:X(\mathcal{S})\to X(\mathcal{T})$ is a sliding block code. Then, we will say that a \textbf{transition point} is any index $i$ so that, if $\varphi(x)=y=\cdots y_{-1}y_0y_1\cdots$ (where here $y_j \in \mathcal{A}$ for all $j$), $y_i\neq y_{i+1}$. If $y_i+1=y_{i+1}$, we will call this an internal transition point, and if $y_{i+1}=1$, we will call this an external transition point.
\end{definition}

The motivation for the distinction between internal and external transition points comes from thinking about elements of $X(\mathcal{S})$ as the concatenation of words from $G_\mathcal{S}$, since the internal transition points will be those within a word in $G_\mathcal{S}$ and the external transition points will be the last index within one word in $G_\mathcal{S}$, and so will be the transition between that word and the next one. Using this, we can finally turn to our proof.

\begin{proof}[Proof of Theorem \ref{limited conjugacy}]
Let $\mathcal{S}=\set{S_1,...,S_p}$ and $\mathcal{T}=\set{T_1,...,T_p}$ with $S_i,T_i\subseteq\mathbb{N}$ for $1\leq i \leq p$. Let $s_i^m$ denote the $m$th element of $S_i$ sorted in increasing order, and define $t_i^m$ similarly. Suppose that for all $i_1,i_2,...,i_p\in\mathbb{N}$, \[
	\sum_{k=1}^p s_k^{i_k}=\sum_{k=1}^p t_k^{i_k}.
\]

\noindent As an outline of the rest of the proof, we will first construct a bijection $\psi:G_{\mathcal{S}}\to G_\mathcal{T}$. Then, we will construct a block map $\Phi$ that induces a sliding block code $\varphi$ so that $\varphi$ induces $\psi$. Finally, from there, we will use Lemma \ref{induced bijection} to show that $\varphi$ is a conjugacy.

Let $x\in G_\mathcal{S}$. Then, by the definition of $G_\mathcal{S}$, we can write $x=1^{n_1}2^{n_2}\cdots p^{n_p}$ where each $n_i\in S_i$. Using the indexing of each $S_i$, we can rewrite that as $x=1^{s_1^{i_1}}2^{s_2^{i_2}}\cdots p^{s_p^{i_p}}$. Using that parameterization, we can construct $\psi:G_{\mathcal{S}}\to G_\mathcal{T}$ by letting $\psi(1^{s_1^{i_1}}2^{s_2^{i_2}}\cdots p^{s_p^{i_p}})=1^{t_1^{i_1}}2^{t_2^{i_2}}\cdots p^{t_p^{i_p}}$, which will be our bijection. 
Notice that, by the assumption that the sum of elements from the $S_i$s and $T_i$s with the same indices will be equal in in the statement of the theorem, for all $x\in G_\mathcal{S}$, $\abs{x}=\abs{\psi(x)}$, which is one of the properties of $X(\mathcal{S})$ and $X(\mathcal{T})$ that will allow us to construct a sliding block code that induces $\psi$.

The other main property we will use also comes from that assumption, which is that for all $j$, there exists some $n_j\in\mathbb{Z}$ so that, for all $i$, $s_j^i+d_j=t_j^i$. Specifically, we can see that $d_j=t_j^1+s_j^1$ by subtracting the sum from the statement of the sum where all the indices are 1 from the sum where all the indices are 1 except for the index for the $j$th set, and that index is $i$. Using this, we can rewrite $\psi$ so that it is expressed solely in terms of the sets $S_j$ and the constants $d_j$. In order to do that, we notice that $1^{t_1^{i_1}}2^{t_2^{i_2}}\cdots p^{t_p^{i_p}}$ can be rewritten as $1^{s_1^{i_1}+d_1}2^{s_2^{i_2}+d_2}\cdots p^{s_p^{i_p}+d_p}$. Therefore, we can say $\psi(1^{s_1^{i_1}}2^{s_2^{i_2}}\cdots p^{s_p^{i_p}})=1^{s_1^{i_1}+d_1}2^{s_2^{i_2}+d_2}\cdots p^{s_p^{i_p}+d_p}$.

Now, we can use this to figure out where the transition points in $x\in X(\mathcal{S})$ must be in any sliding block code that induces $\psi$. Because every element of $G_\mathcal{S}$ is mapped to an element of $G_\mathcal{T}$ of the same length, the external transition points must be the indices $i$ such that $x_i=n$ and $x_{i+1}=1$. Turning to the internal transition points, we can assume that we are only looking at some word $y\in G_\mathcal{S}$ because internal transition points only occur completely within one of those words. For any such $y$, there will be $p-1$ internal transition points: the change from $k$s to $(k+1)$s for all $k\in\set{1,2,...,p-1}$. Then, from the parameterization of $\psi$ in terms of only the $S_j$s and the $d_j$s, we can see that the internal transition point from $k$ to $k+1$ will occur at the letter indexed $\displaystyle\sum_{j=1}^k s_j^{i_j} + \sum_{j=1}^k d_j$. In terms of creating a sliding block code, the value that is of more importance than that, however, is the distance between the change from $k$s to $(k+1)$s in a word in $G_\mathcal{S}$ and the transition point that switches from $k$ to $k+1$ in the image of that word under $\psi$. Using the previous indexing of the transition points, we can see that distance will be $\displaystyle\sum_{j=1}^k d_j$, which we will refer to as $r_k$. Using those, we will define the value $r=\displaystyle 1+\max_k r_k$, which will be both the memory and anticipation of the block map $\Phi$ we create.

To construct $\Phi:B_{2r+3}(X(\mathcal{S}))\to\set{1,2,...,p}$, suppose $x\in B_{2r+3}(X(\mathcal{S}))$ where \newline$x=x_{-r-1}x_{-r}\cdots x_0\cdots x_{r}x_{r+1}$. First, find the transition point as described in the previous paragraph that is the closest to $x_0$, breaking ties to favor transition points that have negative indices in $x$. In order to do so, using the index offsets described, there are three cases for this we must address:

\begin{enumerate}

\item If there does not exist a transition point, then we will say $\Phi(x)=x_0$, because we know all the transition points are far enough from $x_0$ so that in $\Psi$ (as well as any sliding block code that induces $\Psi$), we must have that the letters with that index are the same in both the domain and the image.

\item If the closest transition point comes before $x_0$, suppose it is a transition point from $k$ to $l$, where $l=k+1$ for any internal transition point and $l=1$ for any external transition point. Then, we know that in the image of any word in $X(\mathcal{S})$ of which $x$ is a substring, and in any sliding block code inducing $\Psi$, the letter in the image of that word with the same index as $x_0$ must be $k+1$, so $\Phi(x)=l$.

\item If the closest transition point comes on or after $x_0$, suppose it is a transition point from $k$ to $l$, where $l=k+1$ for any internal transition point and $l=1$ for any external transition point. Then, for similar reasoning as in the previous case, $\Phi(x)=k$.

\end{enumerate}

Now, let $\varphi$ be the sliding block code induced by $\Phi$. By the construction of $\Phi$, we can see $\varphi$ will induce $\psi$. Therefore, by Lemma \ref{induced bijection}, $\varphi$ is a conjugacy from $X(\mathcal{S})$ to $X(\mathcal{T})$.

\end{proof}

Having established this proof, we can use that same technique to create a sliding block code that induces a bijection between the core sets of two specific $\mathcal{S}$-limited shifts.

\begin{example}[$\mathcal{S}$-limited Shift Conjugacy]

As in Example \ref{induced conjugacy example}, let $\mathcal{S}=\set{S_1,S_2,S_3}$ where $S_1=\mathbb{N}$, $S_2=\set{2n : n\in \mathbb{N}}$, and $S_3=\set{3,5}$, and also $\mathcal{T}=\set{T_1,T_2,T_3}$ where $T_1=\mathbb{N}$, $T_2=\set{2n+1 : n\in\mathbb{N}}$, and $T_3=\set{2,4}$. We can see that that $X(\mathcal{S})$ and $X(\mathcal{T})$ satisfy the conditions for Theorem \ref{limited conjugacy}, so $X(\mathcal{S})$ is conjugate to $X(\mathcal{T})$. In the previous example, we constructed a conjugacy $\psi:G_\mathcal{S}\to G_\mathcal{T}$, and then constructed a sliding block code that induced that conjugacy. While we could use that, along with Lemma \ref{induced bijection}, to show $X(\mathcal{S})$ is conjugate to $X(\mathcal{T})$, with Theorem \ref{limited conjugacy}, we can show the conjugacy by just noticing $s_1^m=t_1^m$, $s_2^m+1=t_2^m$, and $s_3^m-1=t_3^m$.

\end{example}

\section{Acknowledgements}
We are grateful to Do\u{g}an \c{C}\"{o}mez for his helpful comments on this work.

\bibliographystyle{plain}
\bibliography{s-lim-arxiv}

\end{document}